\theoremstyle{plain}
\newtheorem{theorem}{Theorem}
\theoremstyle{definition}
\newtheorem{example}{Example}
\theoremstyle{remark}
\newcommand{\bbm}{\begin{bmatrix}}
\newcommand{\ebm}{\end{bmatrix}}
\newcommand{\R}{\mathbb{R}}
\newcommand{\p}{\partial}
\newcommand{\pV}{p_{\mathrm{V}}}
\newcommand{\pE}{p_{\mathrm{E}}}
\newcommand{\pVE}{p_{\mathrm{VE}}}
\newcommand{\Ber}{\mathrm{Ber}}
\newcommand{\id}{\mathrm{id}}
\newcommand{\SW}{\mathrm{SW}}
\newcommand{\DF}{\mathrm{DF}}
\newcommand{\SWDF}{\mathrm{SWDF}}
\begin{document}

\title[Double Flip for Ising Models]{Double Flip Move for Ising Models with Mixed Boundary
  Conditions}

\author[]{Lexing Ying} \address[Lexing Ying]{Department of Mathematics, Stanford University,
  Stanford, CA 94305} \email{lexing@stanford.edu}

\thanks{This work is partially supported by NSF grant DMS 2011699.}

\keywords{Ising model, mixed boundary condition, Swendsen-Wang algorithm.}

\subjclass[2010]{82B20,82B80.}

\begin{abstract}
  This note introduces the double flip move for accelerating the Swendsen-Wang algorithm for Ising
  models with mixed boundary conditions below the critical temperature. The double flip move
  consists of a geometric flip of the spin lattice followed by a spin value flip. Both the symmetric
  and approximately symmetric models are considered. We prove the detailed balance of the double
  flip move and demonstrate its empirical efficiency in mixing.
\end{abstract}

\maketitle

\section{Introduction}\label{sec:intro}

This note is concerned with the Monte Carlo sampling of Ising models with mixed boundary conditions.
Consider a graph $G=(V,E)$ with the vertex set $V$ and the edge set $E$. Assume that $V=I \cup B$,
where $I$ is the subset of interior vertices and $B$ the subset of boundary vertices. Throughout the
note, we use $i,j$ to denote the vertices in $I$ and $b$ for the vertices in $B$. $E$ is the set of
edges, where $ij\in E$ denotes an interior edge while $ib\in E$ denotes an edge between an interior
vertex $i$ and a boundary vertex $b$. The boundary condition is specified by $f=(f_b)_{b\in B}$. The
Ising model with the boundary condition $f$ is the following probability distribution $\pV(\cdot)$
over the configurations $s=(s_i)_{i\in I}$ of the interior vertex set $I$:
\begin{equation}
  \pV(s) \sim \exp\left( \beta \sum_{ij\in E} s_i s_j + \beta \sum_{ib\in E} s_i f_b \right).
  \label{eq:pV}
\end{equation}

One key feature of these models is that below the critical temperature the Gibbs distribution
exhibits on the macroscopic scale different profiles dictated by the boundary condition. Figure
\ref{fig:front} showcases two such examples, where black denotes $+1$ and white denotes $-1$. In
Figure \ref{fig:front}(a), the square Ising lattice has the $+1$ condition on the two vertical sides
but the $-1$ condition on the two horizontal sides. There are two dominant profiles: one contains a
large $-1$ cluster linking two horizontal sides and the other contains a large $+1$ cluster linking
two vertical sides. In Figure \ref{fig:front}(b), the Ising lattice supported on a disk has the $+1$
condition on two disjoint arcs and the $-1$ condition on the other two. The two dominant profiles
are shown in Figure \ref{fig:front}(b). Notice that in each example, due to the symmetry or
approximate symmetry of the Ising lattice and the boundary condition, the two dominant profiles have
comparable probabilities. Therefore, any effective sampling algorithm is required to visit both
profiles frequently.

\begin{figure}[h!]
  \centering
  \begin{tabular}{cc}
    \includegraphics[scale=0.55]{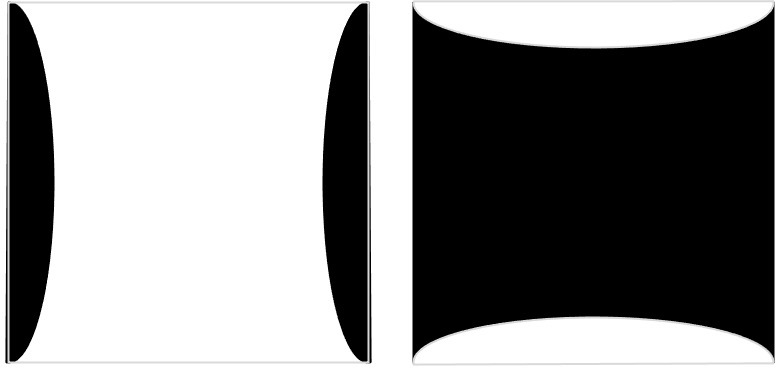} &
    \includegraphics[scale=0.55]{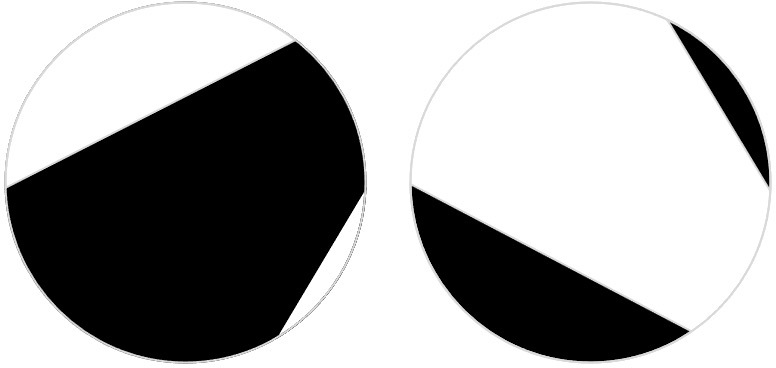} \\
    (a) & (b)
  \end{tabular}
  \caption{Ising models with mixed boundary condition. (a) a square lattice and (b) a lattice
    support on a disk. In each example, the model exhibits two dominant profiles on the macroscopic
    scale.}
  \label{fig:front}
\end{figure}

One of the most well-known methods for sampling Ising models is the Swendsen-Wang algorithm
\cite{swendsen1987nonuniversal}, which iterates between the following two substeps in each iteration
\begin{enumerate}
\item Given the current spin configuration, generate an edge configuration according the temperature
  (see Section \ref{sec:sw} for details).
\item To each connected component of the edge configuration, assign all $+1$ or all $-1$ to its
  spins with equal probability. This results in a new spin configuration.
\end{enumerate}
For many boundary conditions including the free boundary condition, the Swendsen-Wang algorithm
exhibits rapid mixing for all temperatures. However, for the mixed boundary conditions shown in
Figure \ref{fig:front}, the Swendsen-Wang algorithm experiences slow convergence under the critical
temperatures, i.e., $T < T_c$ or equivalently $\beta>\beta_c$ in terms of the inverse
temperature. The reason is that, for such a boundary condition, the energy barrier between the two
dominant profiles is much higher than the energy of these profiles. In other words, the
Swendsen-Wang algorithm needs to break a macroscopic number of edges between aligned adjacent spins
in order to transition from one dominant profile to the other. However, breaking so many edges
simultaneously is an event with exponentially small probability.

In this note, we introduce the {\em double flip} move that introduces direct transitions between
these dominant profiles. When combined with the Swendsen-Wang algorithm, it accelerates the mixing
of these Ising model under the critical temperature significantly.

When the Ising model exhibits an exact symmetry (typically a reflection that negates the mixed
boundary condition), the double flip move consists of
\begin{enumerate}
\item A geometric flip of the spin lattice along a symmetry line, followed by
\item A spin-value flip at the interior vertices of the Ising model.
\end{enumerate}
The key observation is that these two flips together preserves the alignment between the adjacent
spins, hence introducing a successful Monte Carlo move.  When the Ising model exhibits only an
approximate symmetry, the double flip move consists of
\begin{enumerate}
\item A geometric flip of the spin lattice along a symmetry line, 
\item A matching step that snaps the flipped copy of the interior vertices to the original copy of
  the interior vertices, and 
\item A spin-value flip at the interior vertices of the Ising model.
\end{enumerate}
In both the exact and the approximate symmetry case, we prove the detailed balance of the double
flip move and demonstrate its efficiency when combined with the Swendsen-Wang algorithm.

{\bf Related works.} In \cite{alexander2001spectral,alexander2001spectralB}, Alexander and Yoshida
studied the spectral gap of the 2D Ising models with mixed boundary conditions. More recently, in
\cite{gheissari2018effect}, Gheissari and Lubetzky studied the effect of the boundary condition for
the 2D Potts models at the critical temperature. In \cite{chatterjee2020speeding}, Chatterjee and
Diaconis showed that most of the deterministic jumps can accelerate the Markov chain mixing when the
equilibrium distribution is uniform.

{\bf Contents.} The rest of the note is organized as follows. Section \ref{sec:sw} reviews the
Swendsen-Wang algorithm for the Ising models with boundary condition. Section \ref{sec:sym}
describes the double flip move for models with exact symmetry and Section \ref{sec:app} extends it
to models with approximate symmetry. Section \ref{sec:disc} discusses some future directions.

\section{Swendsen-Wang algorithm}\label{sec:sw}

In this section, we briefly review the Swendsen-Wang algorithm, which is a Markov Chain Monte Carlo
method for sampling $\pV(\cdot)$. The description here is adapted to the setting with boundary
condition. In each iteration, it generates a new configuration $t$ from the current configuration
$s$ as follows:
\begin{enumerate}
\item Generate an edge configuration $w=(w_{ij})_{ij\in E}$. If the spin values $s_i$ and $s_j$ are
  different, set $w_{ij}=0$. If $s_i$ and $s_j$ are the same, $w_{ij}$ is sampled from the Bernoulli
  distribution $\Ber(1-e^{-2\beta})$, i.e., equal to 1 with probability $1-e^{-2\beta}$ and 0 with
  probability $e^{-2\beta}$.
\item Regards all edges $ij\in E$ with $w_{ij}=1$ and $ib\in E$ with $w_{ib}=1$ as linked. Compute
  the connected components. For each connected component $\gamma$, if $\gamma$ contains a boundary
  vertex, set $(t_i)_{i\in\gamma}$ to the spin of the boundary vertex. If not, set all the spins
  $(t_i)_{i\in\gamma}$ to all $-1$ or all $+1$ with equal probability.
\end{enumerate}

The following two related probability distributions are important for analyzing the Swendsen-Wang
algorithm \cite{edwards1988generalization}. The first one is the joint vertex-edge distribution
\begin{align}
  \pVE(s,w)\sim
  & \prod_{ij\in E} \left( (1-e^{-2\beta})\delta_{s_i=s_j}\delta_{w_{ij}=1} + e^{-2\beta}\delta_{w_{ij}=0}\right) \cdot \\
  & \prod_{ib\in E} \left( (1-e^{-2\beta})\delta_{s_i=f_b}\delta_{w_{ib}=1} + e^{-2\beta}\delta_{w_{ib}=0}\right) \nonumber.
  \label{eq:pVE}
\end{align}
The second one is the edge distribution
\begin{equation}
  \pE(w) \sim
  \prod_{w_{ij}=1}(1-e^{-2\beta}) \prod_{w_{ij}=0} e^{-2\beta} \cdot 
  \prod_{w_{ib}=1}(1-e^{-2\beta}) \prod_{w_{ib}=0} e^{-2\beta} \cdot 2^{|\mathcal{C}_w|},
  \label{eq:pE}
\end{equation}
where $\mathcal{C}_w$ is set of connected components of $w$ that contain only the interior vertices.

Summing $\pVE(s,w)$ over $s$ or $w$ gives \cite{edwards1988generalization}
\begin{equation}
  \sum_s \pVE(s,w) = \pE(w),\quad
  \sum_w \pVE(s,w) = \pV(s).
  \label{eq:rel}
\end{equation}
A direct consequence of \eqref{eq:rel} is that the Swendsen-Wang algorithm can be viewed as a data
augmentation method \cite{liu2001monte}, where the first substep samples the edge configuration $w$
conditioned on the spin configuration $s$ and the second substep samples a new spin configuration
$t$ conditioned on the edge configuration $w$.

This relationship \eqref{eq:rel} also shows that Swendsen-Wang algorithm satisfies the detailed
balance, i.e.,
\[
\pV(s) P_\SW(s,t) = \pV(t) P_\SW(t,s),
\]
where $P_\SW$ is for the Swendsen-Wang transition matrix. To see this, note that $P_\SW(s,t)=\sum_w
P_w(s,t)$ and $P_\SW(t,s) = \sum_w P_w(t,s)$, where the sum in $w$ is taken over all compatible edge
configurations $w$ and $P_w(s,t)$ is the transition probability from $s$ to $t$ via the edge
configuration $w$. Therefore, it is sufficient to show
\[
\pV(s) P_w(s,t) = \pV(t) P_w(t,s)
\]
for any compatible edge configuration $w$. Since the transitions from the edge configuration $w$ to
the spin configurations $s$ and $t$ are the same, it reduces to showing
\[
\pV(s) P(s,w) = \pV(t) P(t,w),
\]
where $P(s,w)$ is the probability of obtaining the edge configuration $w$ from $s$ and similarly for
$P(t,w)$. In fact $\pV(s) P(s,w)$ is independent of the spin configuration $s$ by the following
argument. First, if an edge $ij\in E$ has configuration $w_{ij}=1$, then $s_i=s_j$. Second, if
$ij\in E$ has configuration $w_{ij}=0$, then $s_i$ and $s_j$ can either be the same or different. In
the former case, the contribution to $\pV(s) P(s,w)$ from $ij$ is $e^{2\beta} \cdot e^{-2\beta}=1$
up to a normalization constant. In the latter case, the contribution is $1\cdot 1=1$ up to the same
normalization constant. The same argument applies to the edges $ib\in E$ and therefore $\pV(s)
P(s,w)$ is independent of $s$.

The Swendsen-Wang algorithm unfortunately does not encourage transitions between the dominant
profiles shown in Figure \ref{fig:front}. For these mixed boundary condition, such a transition
requires breaking a macroscopic number of edges between aligned adjacent spins, which has an
exponentially small probability. This is the motivation for introducing the double flip move.

\section{Double flip for symmetric models}\label{sec:sym}

The double flip move is designed to introduce explicit transitions between the different profiles as
shown Figure \ref{fig:front}. This section assumes that the Ising model enjoys an explicit graph
involution, i.e., there exists a map $m: V\rightarrow V$ such that
\begin{itemize}
\item $m$ maps $I$ to $I$ and $B$ to $B$, respectively, and $m^2 = \id$,
\item $ij\in E$ iff $m(i) \sim m(j)\in E$, and $ib\in E$ iff $m(i)m(b)\in E$,
\item $f_{m(b)} = -f_b$.
\end{itemize}
For example, in Figure \ref{fig:ex1}(a) $m$ is the reflection along one of the diagonals of the
square, while in Figure \ref{fig:ex2}(a) $m$ is the reflection along either the $x$ axis or the $y$
axis.


In the double flip move, the first flip implements the map $m$ to the interior vertices in
$I$. After that, the second flip negates the spin of the mapped interior vertices. More
specifically, the resulting new spin configuration $t$ is defined by
\begin{equation}
  t_{m(i)} = - s_i,\quad \forall i\in I.
  \label{eq:df}
\end{equation}
Since $m^2=\id$, we also have $t_i = - s_{m(i)}$ for any $i\in I$.

\begin{theorem}
  The double flip move satisfies the detailed balance.
\end{theorem}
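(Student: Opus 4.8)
The plan is to show that the double flip move satisfies detailed balance with respect to $\pV$, namely that $\pV(s)\,P_{\DF}(s,t) = \pV(t)\,P_{\DF}(t,s)$, where $P_{\DF}$ is the transition probability of the double flip move. The move defined by \eqref{eq:df} is deterministic: given $s$, the image $t$ is uniquely determined, and since $m^2=\id$ the move is its own inverse, so $P_{\DF}(s,t)=P_{\DF}(t,s)=1$ for the paired configurations $s,t$ and zero otherwise. Detailed balance therefore reduces to the single claim that $\pV(s)=\pV(t)$, i.e. that the double flip is energy-preserving and, crucially, leaves the normalizing constant unchanged so that the unnormalized weights in \eqref{eq:pV} are equal.

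First I would compute the exponent in \eqref{eq:pV} for the configuration $t$ and show it equals the exponent for $s$. Using $t_{m(i)}=-s_i$ (equivalently $t_i=-s_{m(i)}$), I would substitute into each of the two sums. For the interior term $\sum_{ij\in E} t_i t_j$, I reindex the sum via the involution $m$: because $ij\in E$ iff $m(i)m(j)\in E$, the map $ij\mapsto m(i)m(j)$ is a bijection on interior edges, and $t_{m(i)}t_{m(j)}=(-s_i)(-s_j)=s_is_j$, so the interior sum is invariant. For the boundary term $\sum_{ib\in E} t_i f_b$, I reindex by $ib\mapsto m(i)m(b)$, which is a bijection on boundary edges by the second axiom; here the two sign flips conspire perfectly, since $t_{m(i)}f_{m(b)}=(-s_i)(-f_b)=s_if_b$ using the antisymmetry $f_{m(b)}=-f_b$ from the third axiom. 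Both sums are thus preserved term by term after reindexing, so the exponents agree and the unnormalized weights coincide.

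The one point that requires care — and the step I expect to be the main (though mild) obstacle — is handling the partition function. Since the same exponential weight is being compared and the normalization $Z=\sum_{s'}\exp(\cdots)$ is a single constant shared by all configurations, the equality of unnormalized weights immediately gives $\pV(s)=\pV(t)$; I would note explicitly that $m$ being a bijection of $I$ means $s'\mapsto t'$ (where $t'_{m(i)}=-s'_i$) is a bijection of the configuration space, which confirms $Z$ is well defined and unaffected. Combining $\pV(s)=\pV(t)$ with the deterministic, involutive, symmetric transition $P_{\DF}(s,t)=P_{\DF}(t,s)$ yields detailed balance directly. I would close by remarking that the double flip alone is not ergodic and is meant to be interleaved with the Swendsen-Wang sweeps of Section \ref{sec:sw}, whose own detailed balance was already established, so that the composite chain preserves $\pV$.
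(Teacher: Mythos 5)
Your proposal is correct and follows essentially the same argument as the paper: the move is deterministic and involutive so the transition probabilities are symmetric, and $\pV(s)=\pV(t)$ follows by reindexing the interior and boundary sums through the graph involution $m$, with the two sign flips cancelling on interior edges and the flip pairing with $f_{m(b)}=-f_b$ on boundary edges. The extra remarks on the partition function and on ergodicity are harmless but not needed, since $\pV$ is a single fixed distribution and detailed balance is a statement about the move alone.
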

\begin{proof}
  To show the detailed balance, one needs to prove that, for any two spin configurations $s$ and $t$,
  \[
  \pV(s) P_\DF(s,t) = \pV(t) P_\DF(t,s),
  \]
  where $P_\DF$ is the double flip move transition matrix. From the definition of the double flip
  move, the transition probabilities $P(s,t)$ and $P(t,s)$ equal to one if $s$ and $t$ satisfy
  \eqref{eq:df} and zero otherwise. Hence it is sufficient to show $\pV(s) = \pV(t)$ when
  \eqref{eq:df} holds.

  For each $ij \in E$,
  \[
  t_i t_j = (-1)^2 s_{m(i)} s_{m(j)} = s_{m(i)} s_{m(j)}.
  \]
  Taking the sum over all interior edges gives
  \[
  \sum_{ij\in E} t_i t_j = \sum_{ij\in E} s_{m(i)} s_{m(j)}=  \sum_{ij\in E} s_i s_j.
  \]
  For each $ib\in E$,
  \[
  t_i f_b = (-1)^2 s_{m(i)} f_{m(b)} = s_{m(i)} f_{m(b)}.
  \]
  Taking the sum over all interior edges results in
  \[
  \sum_{ib\in E} t_i f_b = \sum_{ib\in E} s_{m(i)} f_{m(b)}=  \sum_{ib\in E} s_i f_b.
  \]
  Together, $\sum_{ij\in E} t_i t_j + \sum_{ib\in E} t_i f_b = \sum_{ij\in E} s_i s_j + \sum_{ib\in
    E} s_i f_b$, i.e., $\pV(s) = \pV(t)$.
\end{proof}

We can now combine the double flip move with the Swendsen-Wang move. For a constant $\eta \in(0,1)$,
each iteration of the combined algorithm performs the double flip move with probability $\eta$ and
the Swendsen-Wang move with probability $1-\eta$.
\begin{enumerate}
\item Choose $u$ from $\Ber(\eta)$, i.e., equal to 1 with probability $\eta$.
\item If $u$ is 1, perform the double flip move, else perform the Swendsen-Wang move.
\end{enumerate}

\begin{theorem}
  The Swendsen-Wang algorithm with double flip satisfies the detailed balance.
\end{theorem}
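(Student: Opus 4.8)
The plan is to show that the combined transition kernel $P_{\SWDF} = \eta P_{\DF} + (1-\eta) P_{\SW}$ inherits detailed balance from its two constituent moves. The key structural fact is that detailed balance is preserved under convex combinations: if two transition matrices $P_1$ and $P_2$ each satisfy detailed balance with respect to the same stationary distribution $\pV$, then so does any mixture $\eta P_1 + (1-\eta) P_2$. This reduces the theorem to two already-established facts, namely that $P_{\SW}$ satisfies detailed balance (proved via the relation \eqref{eq:rel} in Section \ref{sec:sw}) and that $P_{\DF}$ satisfies detailed balance (the preceding theorem).

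First I would write down the transition matrix of the combined algorithm. Following the two-substep description, conditioning on the coin flip $u\sim\Ber(\eta)$ gives
\[
P_{\SWDF}(s,t) = \eta\, P_{\DF}(s,t) + (1-\eta)\, P_{\SW}(s,t)
\]
for all spin configurations $s,t$. Next I would multiply through by $\pV(s)$ and apply detailed balance to each term separately:
\[
\pV(s) P_{\SWDF}(s,t) = \eta\, \pV(s) P_{\DF}(s,t) + (1-\eta)\, \pV(s) P_{\SW}(s,t).
\]
Using $\pV(s) P_{\DF}(s,t) = \pV(t) P_{\DF}(t,s)$ from the previous theorem and $\pV(s) P_{\SW}(s,t) = \pV(t) P_{\SW}(t,s)$ from Section \ref{sec:sw}, the right-hand side becomes
\[
\eta\, \pV(t) P_{\DF}(t,s) + (1-\eta)\, \pV(t) P_{\SW}(t,s) = \pV(t) P_{\SWDF}(t,s),
\]
which is exactly the detailed balance condition for the combined chain.

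I do not expect any serious obstacle here, since the argument is a one-line algebraic consequence of linearity once the two component balances are in hand. The only point requiring a little care is justifying the additive decomposition of $P_{\SWDF}$: one must verify that the coin flip $u$ is drawn independently of the current state $s$, so that the law of the proposed move genuinely factors as a weighted sum of the two kernels with state-independent weights $\eta$ and $1-\eta$. This is immediate from the algorithm's definition, in which $u$ is sampled from $\Ber(\eta)$ before and independently of the move itself. Once that is noted, the computation above closes the proof.
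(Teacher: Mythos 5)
Your proof is correct and follows essentially the same route as the paper: write $P_{\SWDF} = \eta P_{\DF} + (1-\eta) P_{\SW}$ and take the linear combination of the two detailed-balance identities already established for $P_{\SW}$ and $P_{\DF}$. Your extra remark on the state-independence of the coin flip $u$ is a sensible, if minor, addition that the paper leaves implicit.
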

\begin{proof}
  The combined transition matrix is
  \[
  P_{\SWDF} = \eta P_{\DF} + (1-\eta) P_{\SW}.
  \]
  Section \ref{sec:sw} shows that the Swendsen-Wang algorithm satisfies the detailed balance, i.e.,
  \[
  \pV(s) P_\SW(s,t) = \pV(t) P_\SW(t,s).
  \]
  The double flip move satisfies the detailed balance as shown above,
  \[
  \pV(s) P_\DF(s,t) = \pV(t) P_\DF(t,s).
  \]
  A linear combination of these two statements give
  \[
  \pV(s)P_{\SWDF}(s,t) = \pV(t) P_{\SWDF}(t,s)
  \]
  and this finishes the proof.
\end{proof}

Below we compare the performance of the Swendsen-Wang algorithm (SW) and Swendsen-Wang with double
flip (SWDF) using two examples.

\begin{example}
  The Ising model is a square lattice. The mixed boundary condition is $+1$ at the two vertical
  sides and $-1$ at the two horizontal sides. The graph involution $m: V\rightarrow V$ is given by
  the diagonal reflection. Figure \ref{fig:ex1}(a) shows the model at size $n_1=n_2=20$.

  The experiments are performed for the problem size $n_1=n_2=100$ at the inverse temperature
  $\beta=0.5$.  We start from the all $-1$ configuration and carry out $10000$ iterations for both
  SW and SWDF. For SWDF, we set the parameter $\eta=1/100$. Figure \ref{fig:ex1}(b) and (c) plot the
  average spin value
  \[
  \frac{1}{|I|} \sum_{i\in I} s_i
  \]
  of these two algorithms, respectively. Figure \ref{fig:ex1}(b) shows that SW fails to introduce
  transitions between the $-1$ and the $+1$ dominant profiles, since the average spin is always
  below $0$. On the other hand, Figure \ref{fig:ex1}(c) demonstrates that SWDF explores both
  profiles with $100$ transitions in between.
\end{example}

\begin{figure}[h!]
  \centering
  \begin{tabular}{ccc}
    \includegraphics[scale=0.3]{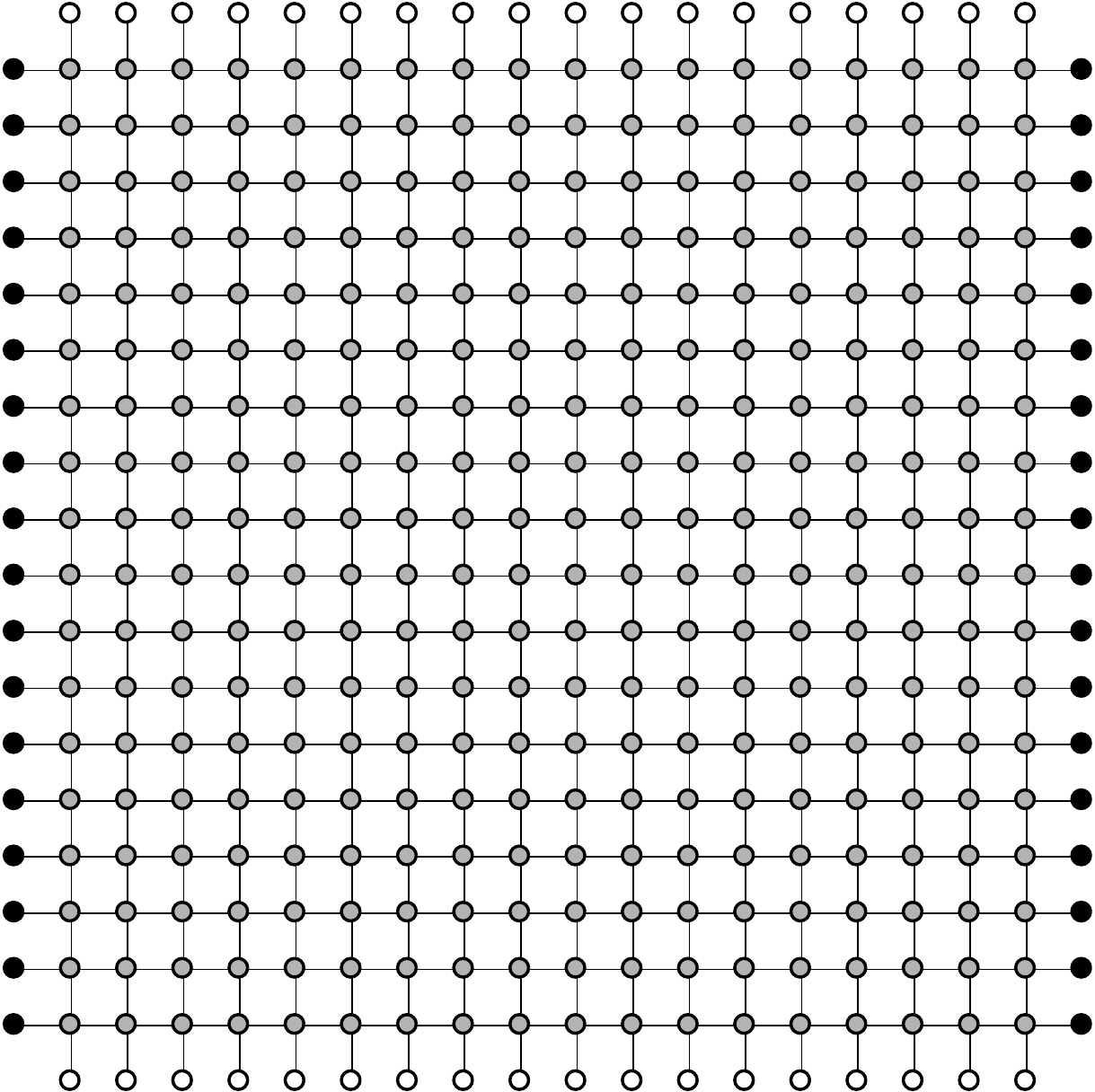} &
    \includegraphics[scale=0.3]{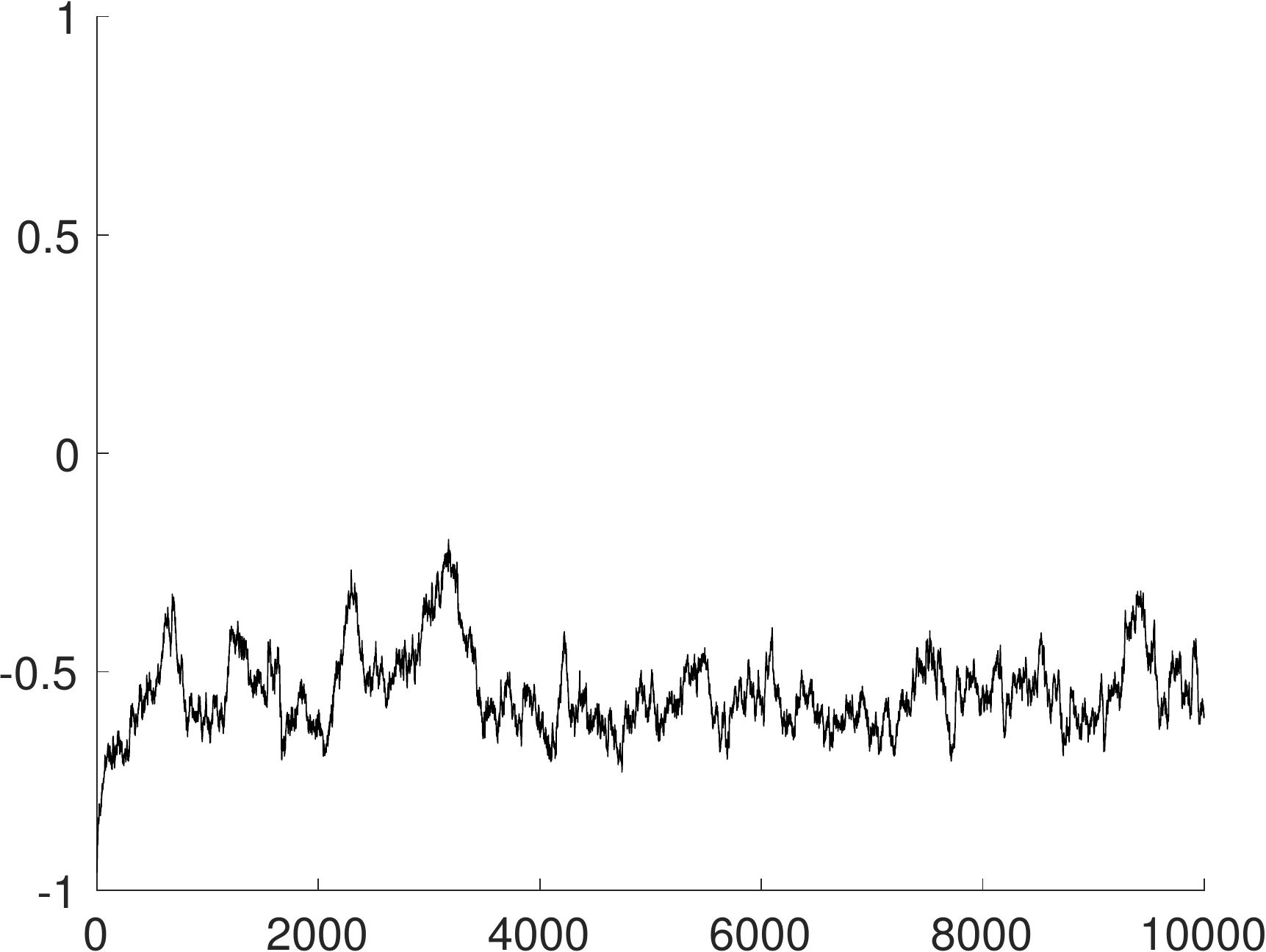} &
    \includegraphics[scale=0.3]{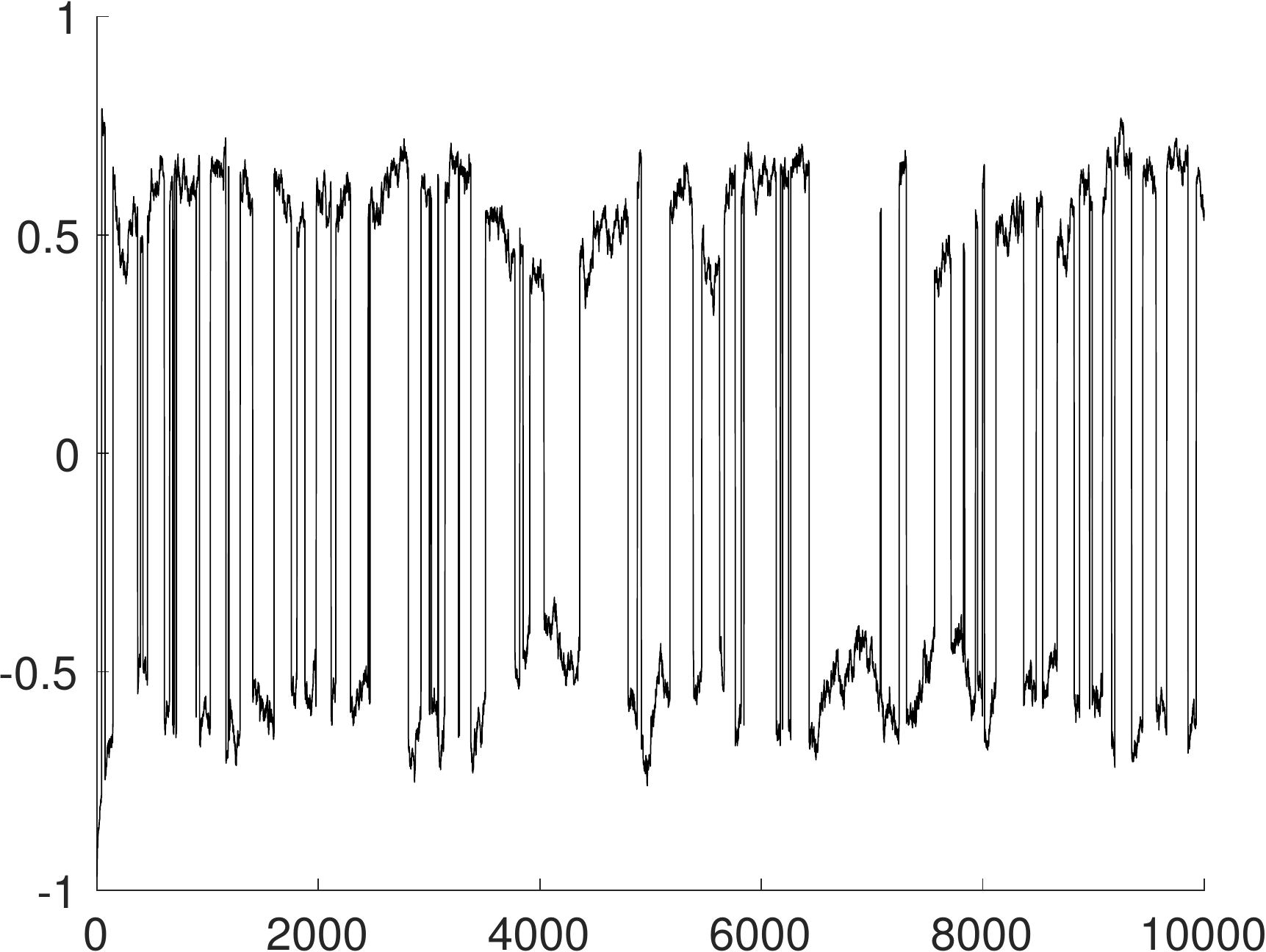} \\
    (a) & (b) & (c)
  \end{tabular}
  \caption{(a) The lattice along with the mixed boundary condition (black for $+1$ and white for
    $-1$). (b) The average spin value of the Swendsen-Wang algorithm. (c) The average spin value of
    the Swendsen-Wang algorithm with double flip.}
  \label{fig:ex1}
\end{figure}

\begin{example}
  The Ising lattice is still a square. The mixed boundary condition is $+1$ in the first and third
  quadrants but $-1$ in the second and fourth quadrants. The graph involution $m: V\rightarrow V$ is
  given by the reflection along either the $x$ or the $y$ axis.  Figure \ref{fig:ex2}(a) shows the
  problem at size $n_1=n_2=20$.

  Similar to the previous example, the experiments are performed for the problem size $n_1=n_2=100$
  at the inverse temperature $\beta=0.5$.  We start from the all $-1$ configuration and carry out
  $10000$ iterations for both SW and SWDF. The $\eta$ parameter of SWDF is $\eta=1/100$.  Figure
  \ref{fig:ex2}(b) shows that SW fails to introduce transitions between the $-1$ dominant and the
  $+1$ dominant profiles, while Figure \ref{fig:ex2}(c) demonstrates that SWDF explores both
  profiles with $100$ transitions in between.
\end{example}

\begin{figure}[h!]
  \centering
  \begin{tabular}{ccc}
    \includegraphics[scale=0.3]{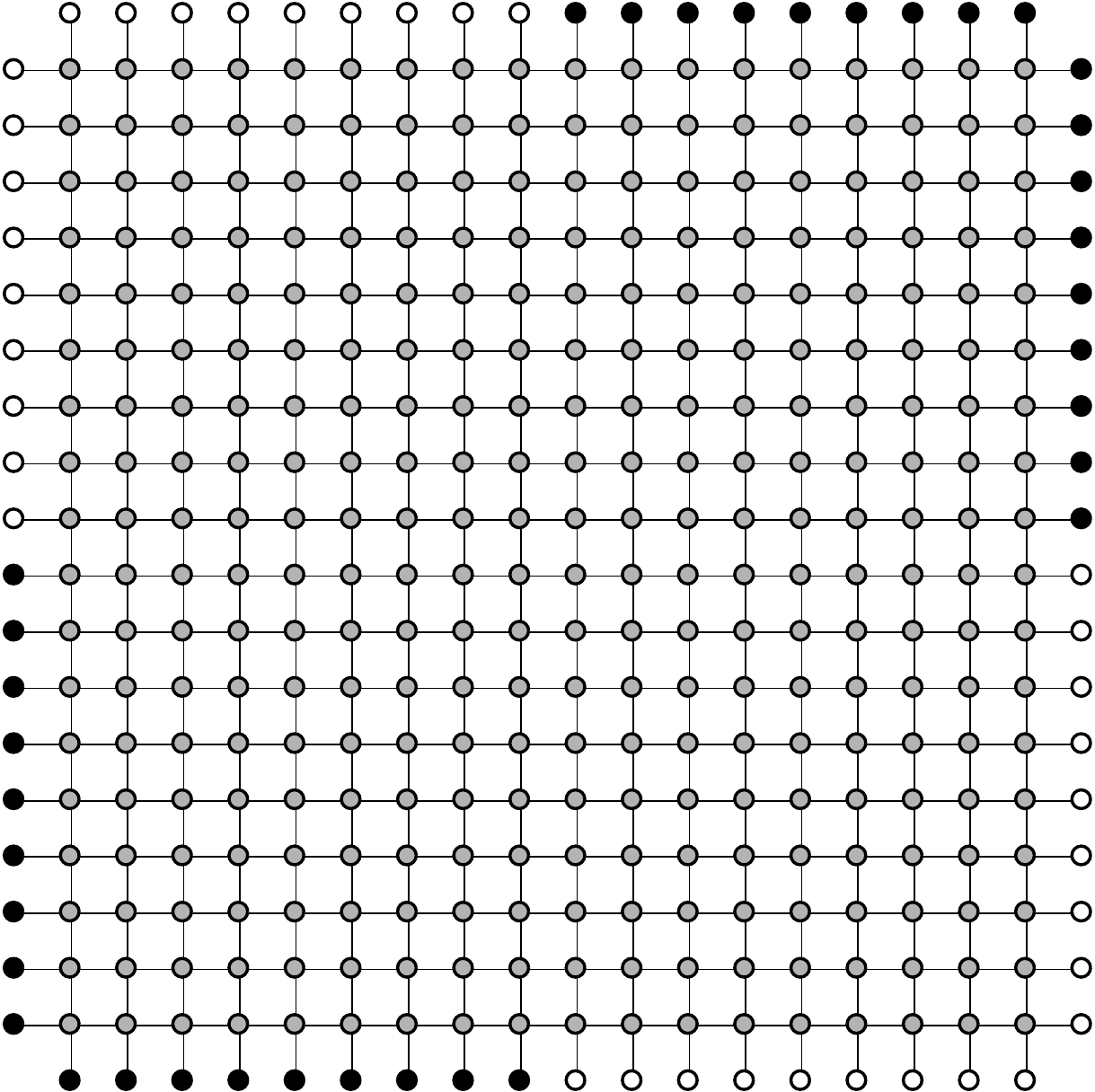} &
    \includegraphics[scale=0.3]{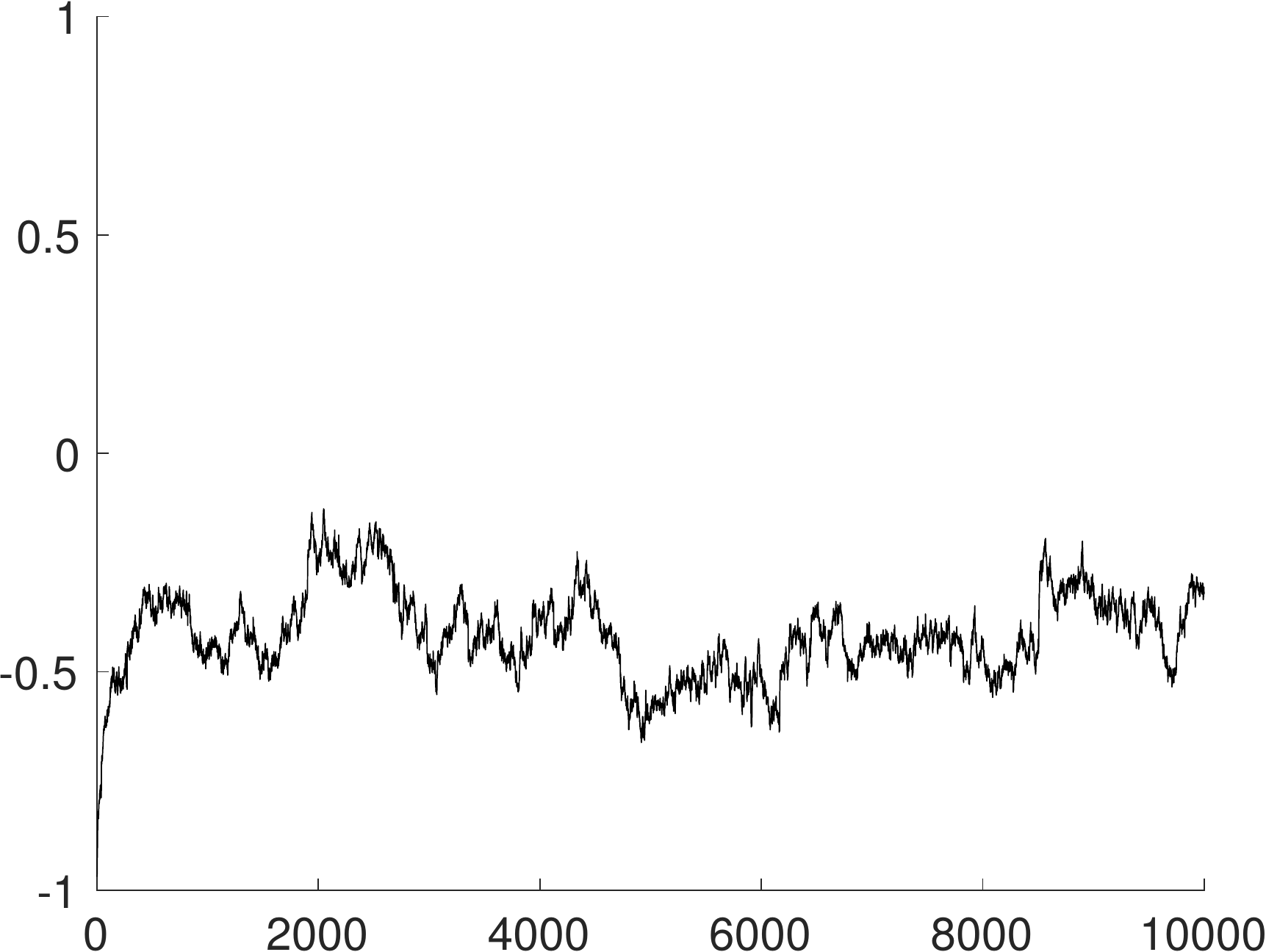} &
    \includegraphics[scale=0.3]{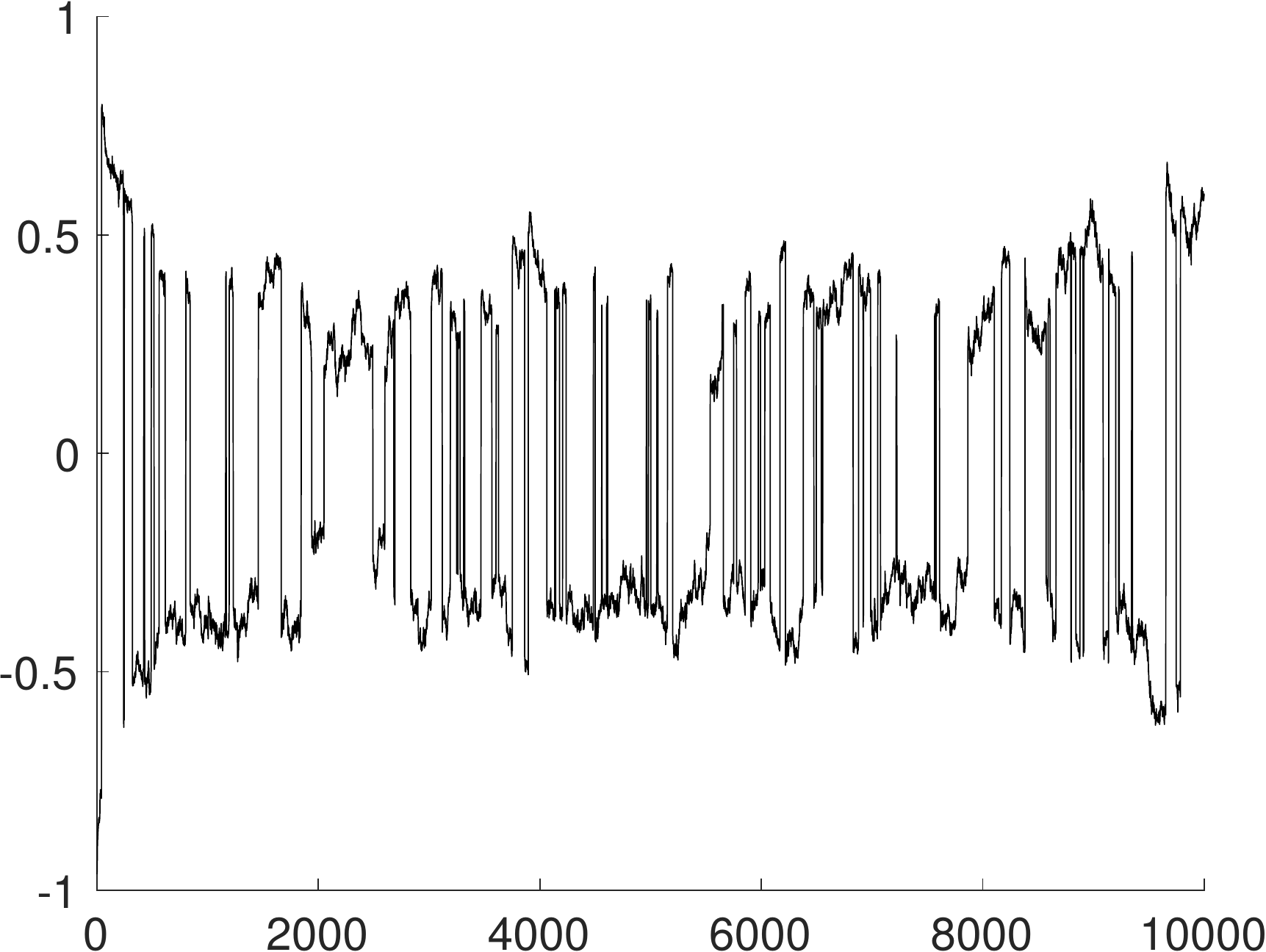} \\
    (a) & (b) & (c)
  \end{tabular}
  \caption{(a) The lattice along with the mixed boundary condition (black for $+1$ and white for
    $-1$). (b) The average spin value of the Swendsen-Wang algorithm. (c) The average spin value of
    the Swendsen-Wang algorithm with double flip.}
  \label{fig:ex2}
\end{figure}

\section{Double flip for approximately symmetric models}\label{sec:app}

The algorithm in Section \ref{sec:sym} is efficient but depends on exact symmetries. However, many
Ising models without exact symmetries also exhibit multiple dominant profiles such as in Figure
\ref{fig:front}. This section extends the double flip move to models with approximate symmetry.


To take a more geometric viewpoint, assume that the Ising model is embedded in a domain
$\Omega\subset \R^2$ with the boundary denoted by $\p\Omega$.
\begin{itemize}
\item For each $i\in I$, $x_i$ is in the interior of $\Omega$. For each $b\in B$, $x_b$ is in
  $\p\Omega$.
\item The edges $ij$ or $ib$ in the edge set $E$ are segments between geometrically nearby vertices.
\item $\p\Omega = \p\Omega_+ \cup \p\Omega_-$. If $x_b \in \p\Omega_+$, then $f_b = 1$.  If
  $x_b\in\p\Omega_-$, then $f_b = -1$.
\item Assume that there is a {\em continuous} involution $\mu:\bar\Omega \rightarrow \bar\Omega$
  such that $\mu^2=\id$ and $\mu(\p\Omega_+) = \p\Omega_-$.
\end{itemize}

As $\mu$ is only defined as an involution of $\Omega$, in general $\{\mu(x_i)\}_{i\in I}
\not=\{x_i\}_{j\in I}$ and therefore $\mu$ does not directly introduce an involution on the set $I$
of interior vertices. To fix this, we introduce a {\em discrete} involution $m:I\rightarrow I$ such
that
\begin{equation}
  \label{eq:mu}
  x_{m(i)} \approx \mu(x_i).
\end{equation}
We shall discuss below how to construct $m$ based on $\mu$. For now, assuming the existence of $m$,
one can define a Metropolized double flip move for approximately symmetric models.
\begin{enumerate}
\item Define a spin configuration $t$ via $t_i = -s_{m(i)}$.
\item Evaluate $c = \min(1, \pV(t)/\pV(s))$.
\item Sample $u \in [0,1]$ uniformly. If $u\le c$, set $t$ to be the new spin
  configuration. Otherwise, keep $s$ as the spin configuration.
\end{enumerate}
Since $m$ is an involution and this is a Metropolized move, the following statement holds.
\begin{theorem}
  The Metropolized double flip move satisfies the detailed balance.
\end{theorem}
It can also be combined with the Swendsen-Wang move in the same way as described in Section
\ref{sec:sym}.
\begin{theorem}
  The Swendsen-Wang algorithm with Metropolized double flip satisfies the detailed balance.
\end{theorem}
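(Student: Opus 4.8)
The plan is to mirror the convex-combination argument already used in Section \ref{sec:sym} for the exact-symmetry case, since the structure of the combined chain is identical. As in that section, each iteration proposes the Metropolized double flip move with probability $\eta$ and the Swendsen-Wang move with probability $1-\eta$, so the transition matrix of the combined chain is
\[
P_{\SWDF} = \eta P_{\DF} + (1-\eta) P_{\SW}.
\]
Detailed balance with respect to $\pV$ is a linear condition in the transition kernel, so it suffices to verify it for each summand separately and then add the two identities.

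First I would invoke the detailed balance of the Swendsen-Wang move established in Section \ref{sec:sw}, namely $\pV(s) P_\SW(s,t) = \pV(t) P_\SW(t,s)$. Second I would invoke the detailed balance of the Metropolized double flip move, which is exactly the content of the immediately preceding theorem and may therefore be taken as given. Multiplying the first identity by $1-\eta$, multiplying the second by $\eta$, and summing yields
\[
\pV(s) P_{\SWDF}(s,t) = \pV(t) P_{\SWDF}(t,s),
\]
which is the claim.

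I do not expect any genuine obstacle in this step: reversibility is preserved under convex combinations of kernels that share the same stationary distribution, and both ingredients are already available. The only place where substantive care is required is the detailed balance of the Metropolized double flip move itself — the preceding theorem — which rests on $m$ being an involution (so that $P_{\DF}(s,t)$ and $P_{\DF}(t,s)$ connect the same pair of configurations) together with the Metropolis--Hastings acceptance ratio $c=\min(1,\pV(t)/\pV(s))$ enforcing reversibility irrespective of how closely $x_{m(i)}$ approximates $\mu(x_i)$. Granting that theorem, the present statement follows immediately by the linear-combination argument above.
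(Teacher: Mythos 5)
Your proposal is correct and matches the paper's intended argument: the paper states this theorem without a separate proof precisely because, as you note, it follows by the same convex-combination argument $P_{\SWDF} = \eta P_{\DF} + (1-\eta) P_{\SW}$ given explicitly for the exact-symmetry case in Section \ref{sec:sym}, combined with the detailed balance of the Swendsen-Wang move and of the Metropolized double flip move. Your added remark that the Metropolized move's reversibility rests on $m$ being an involution together with the acceptance ratio $c=\min(1,\pV(t)/\pV(s))$, independently of how well $x_{m(i)}$ approximates $\mu(x_i)$, is also exactly the justification the paper gives for the preceding theorem.
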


The efficiency of this algorithm depends on the criteria of $\pV(t)/\pV(s)$ neither too small or too
large. This is in fact promoted by the condition \eqref{eq:mu}, since $ij\in E$ implies
\[
x_{m(i)} \approx \mu(x_i) \approx \mu(x_j) \approx x_{m(j)},
\]
where the second step uses the continuity of the domain involution $\mu$. Therefore, when $ij\in E$,
$m(i)m(j)$ is also likely to be an edge of $E$ with
\[
s_i s_j = (-1)^2 s_i s_j = t_{m(i)} t_{m(j)}.
\]
If this holds for most pairs $ij\in E$ and $ib\in E$, we have $\pV(t)/\pV(s)$ under control.

The remaining question is how to construct $m:I\rightarrow I$ so that \eqref{eq:mu} holds and $m^2 =
\id$. One possibility is to formulate this as a matching problem between the geometrically flipped
vertices $\{\mu(x_i)\}_{i\in I}$ and the original vertices $\{x_j\}_{j\in I}$, with a cost defined
using the $\ell_2$ or $\ell_\infty$ distance. Equivalently, this is an optimal transport problem
between the two distributions
\[
\sum_{i\in I} \delta_{\mu(x_i)}(\cdot)\quad\text{and}\quad \sum_{j\in I} \delta_{x_j}(\cdot).
\]
Once the matching (or the transport map) is available, we define $m(i)=j$ if $\mu(x_i)$
is matched with $x_j$. However, this approach has two technical difficulties.
\begin{itemize}
\item The computation cost of the matching or optimal transport algorithm
  \cite{kuhn1955hungarian,peyre2019computational} can be relatively high.
\item The involution condition $m^2 = \id$ is not guaranteed.
\end{itemize}
In the implementation, we adopt the following heuristic procedure. Assume without loss of generality
that the domain $\Omega$ is centered at the origin.
\begin{enumerate}
\item Order the interior vertices $\{x_j\}_{j\in I}$ based on their distances to the origin in the
  decreasing order. The distance is typically chosen to be the $\ell_\infty$ norm or the $\ell_2$
  norm.
\item Mark all vertices $j\in I$ as unpaired.
\item Scan the interior vertices in this ordered list. For each $x_j$, if $j$ is already paired, then
  skip. If not, find the unpaired $i$ such that $\mu(x_i)$ is closet to $x_j$, pair $i$ and $j$
  \[
  m(i):=j, \quad m(j):=i,
  \]
  and mark both $i$ and $j$ as paired. 
\end{enumerate}
The heuristic is that, by following the order of decreasing distance to the origin, the unpaired
vertices are forced to cluster near the center of the domain, thus reducing the overall transport
cost.


\begin{figure}[h!]
  \centering
  \begin{tabular}{cc}
    \includegraphics[scale=0.3]{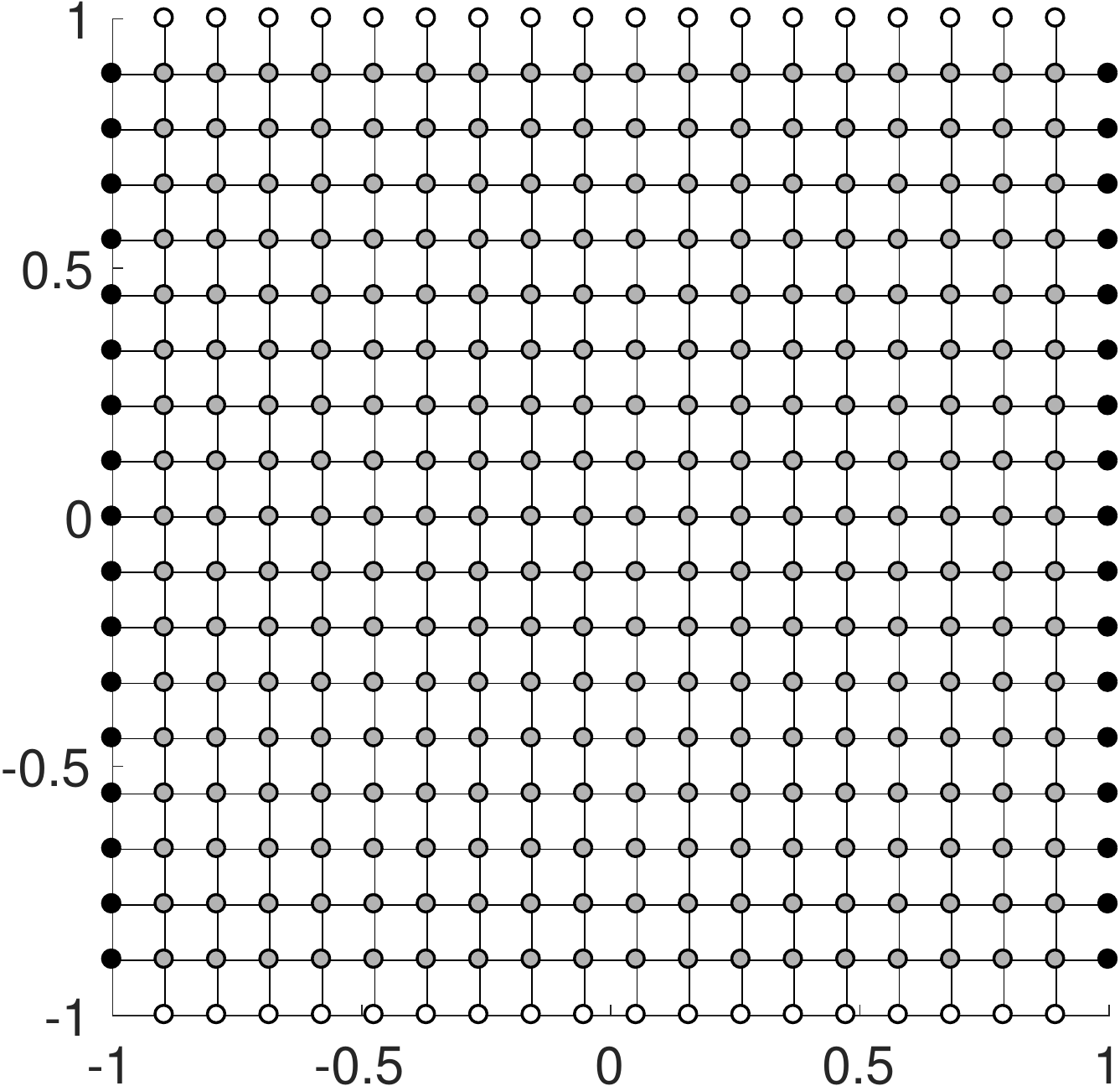} &    \includegraphics[scale=0.3]{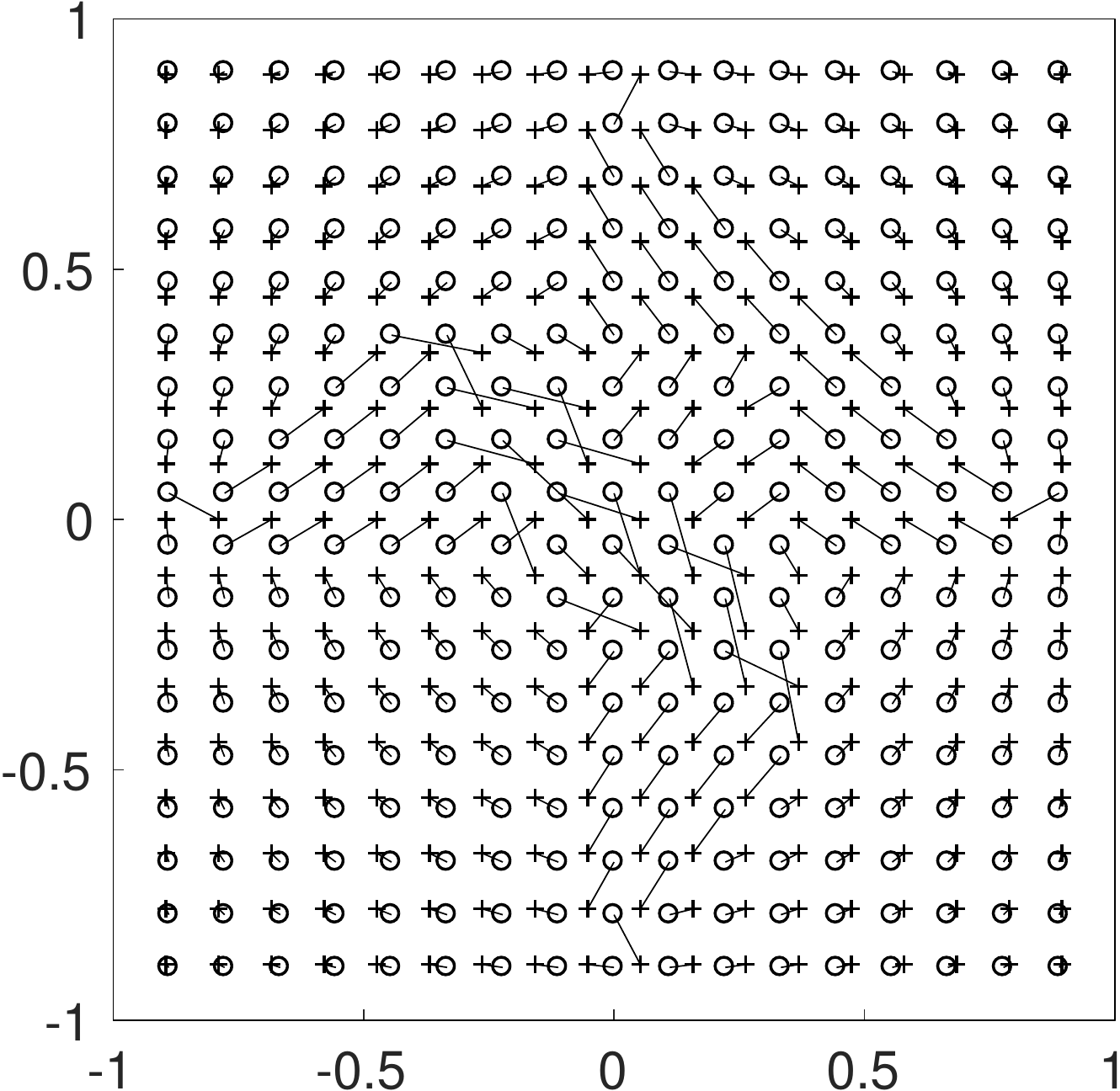}\\
    (a) & (b) \\
    \includegraphics[scale=0.3]{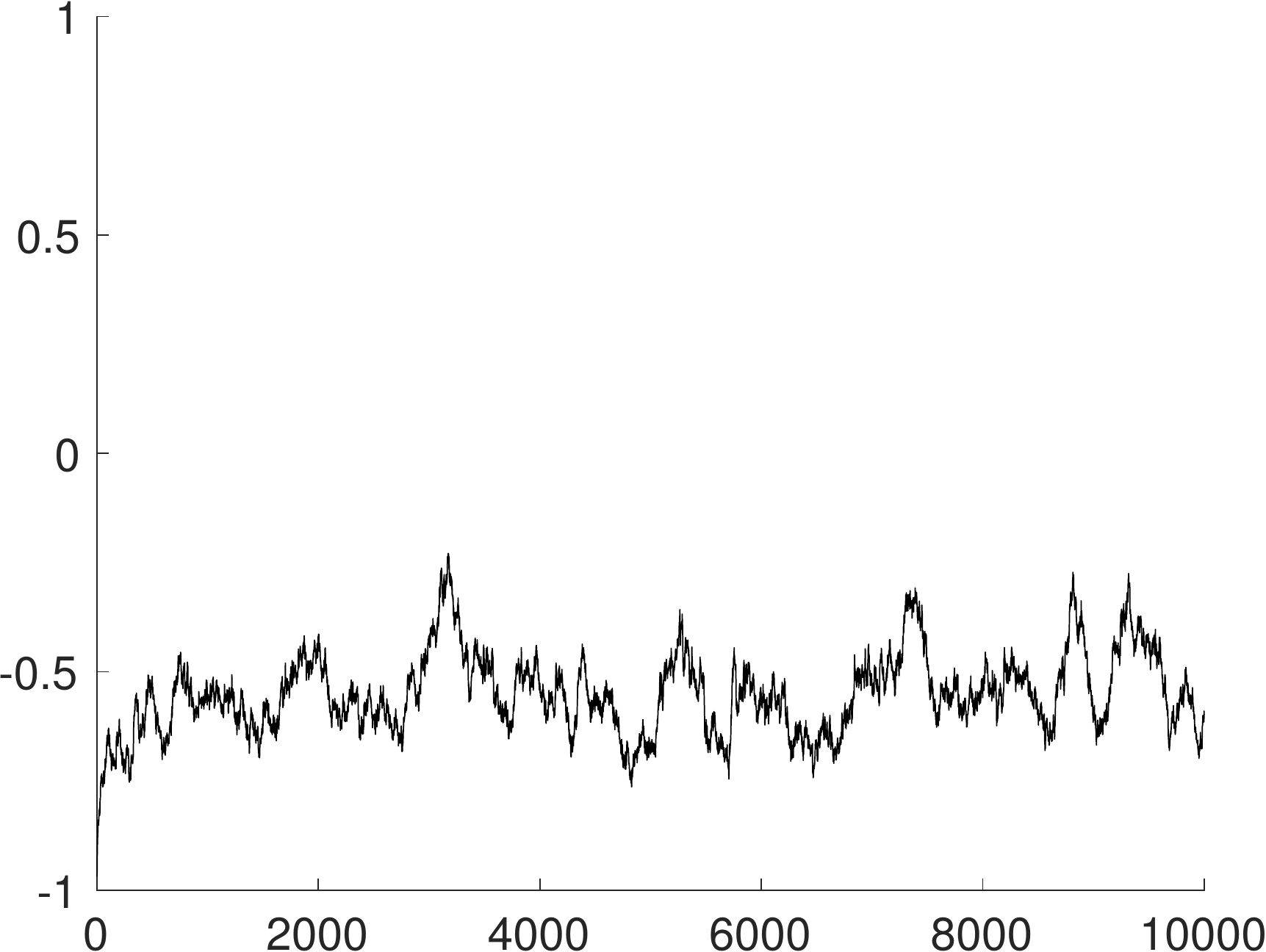} &    \includegraphics[scale=0.3]{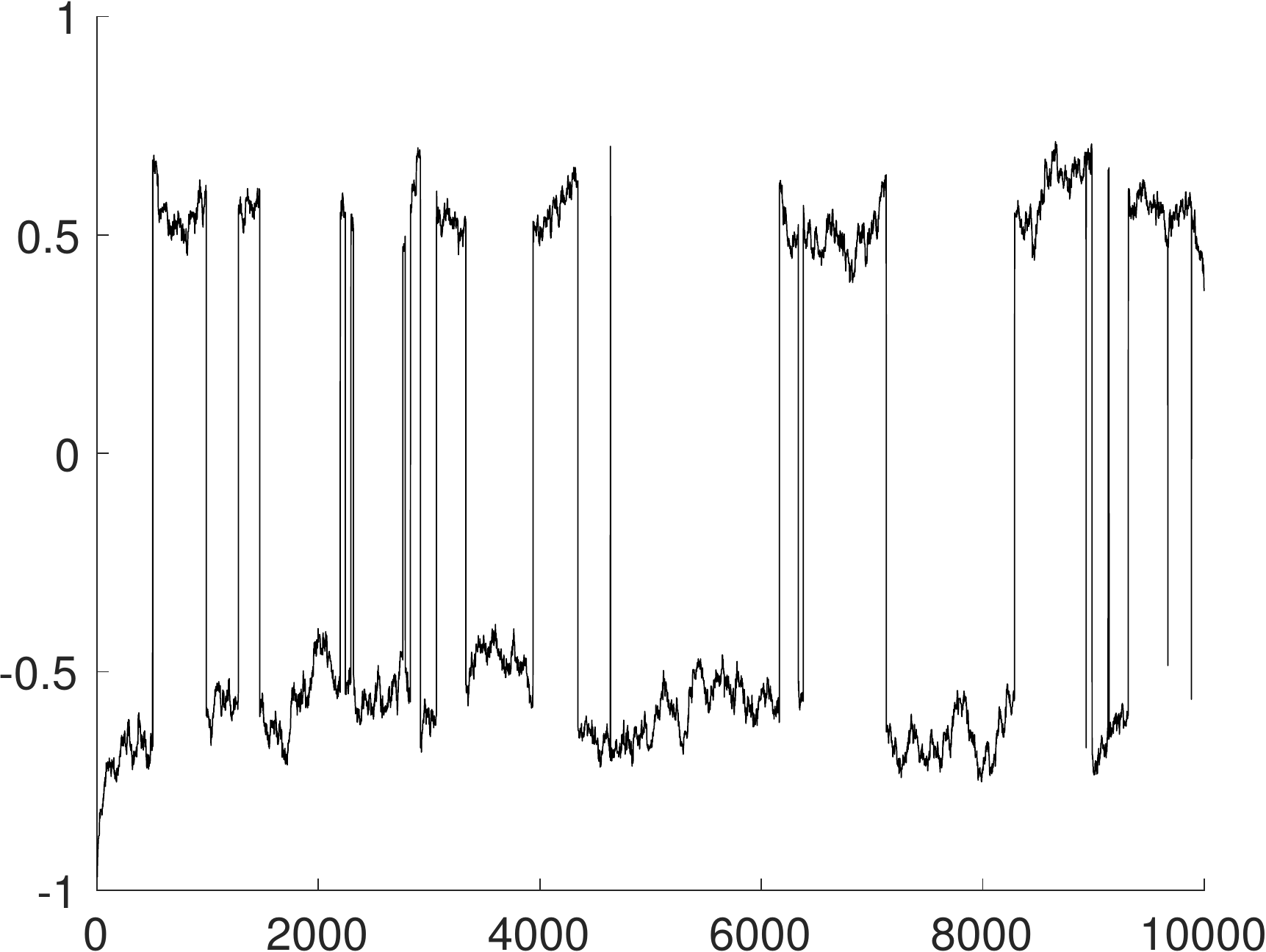} \\
    (c) & (d)
  \end{tabular}
  \caption{(a) The rectangular lattice along with the mixed boundary condition (black for $+1$ and
    white for $-1$).  (b) The transport map from $\{\mu(x_i)\}_{i\in I}$ (marked with $+$) to
    $\{x_j\}_{j\in I}$ (marked with $\circ$). (c) The average spin value of the Swendsen-Wang
    algorithm. (d) The average spin value of the Swendsen-Wang algorithm with double flip.}
  \label{fig:ex3}
\end{figure}

Below we compare the performance of the Swendsen-Wang algorithm (SW) and Swendsen-Wang with
Metropolized double flip (SWDF) using three examples.

\begin{example}
The Ising model is a rectangular lattice where the number of rows and columns are different, as
shown in Figure \ref{fig:ex3}. The mixed boundary condition is $+1$ at the two vertical sides and
$-1$ at the two horizontal sides. The diagonal reflection is no longer an exact symmetry.  Figure
\ref{fig:ex3}(a) shows the system with size $20\times 19$.  Figure \ref{fig:ex3}(b) plots the
transport map between $\{\mu(x_i)\}_{i\in I}$ (marked with $+$) to $\{x_j\}_{j\in I}$ (marked with
$\circ$). As shown, the transport map is quite local, demonstrating the efficiency of the heuristic
matching procedure.

The experiments are performed for the problem size $100\times 99$ at the inverse temperature
$\beta=0.5$. We start from the all $-1$ configuration and carry out $10000$ iterations for both SW
and SWDF. The $\eta$ parameter of SWDF is $\eta=1/3$. Figure \ref{fig:ex3}(c) shows that SW fails to
introduce transitions between the $-1$ dominant and the $+1$ dominant profiles, while Figure
\ref{fig:ex3}(d) demonstrates that SWDF explores both profiles with $41$ transitions out of about
$3000$ trials.
\end{example}

\begin{figure}[h!]
  \centering
  \begin{tabular}{cc}
    \includegraphics[scale=0.3]{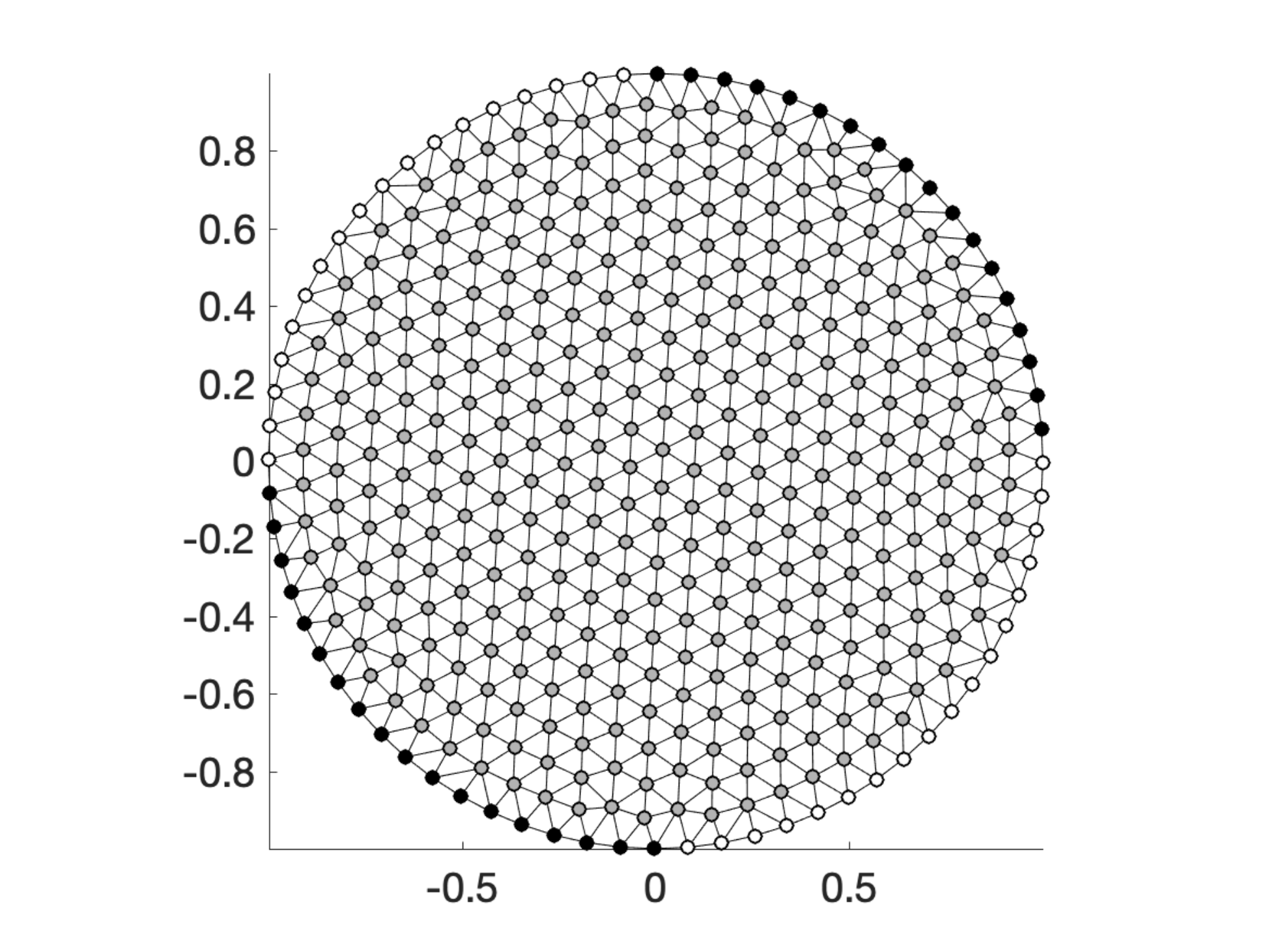} &    \includegraphics[scale=0.3]{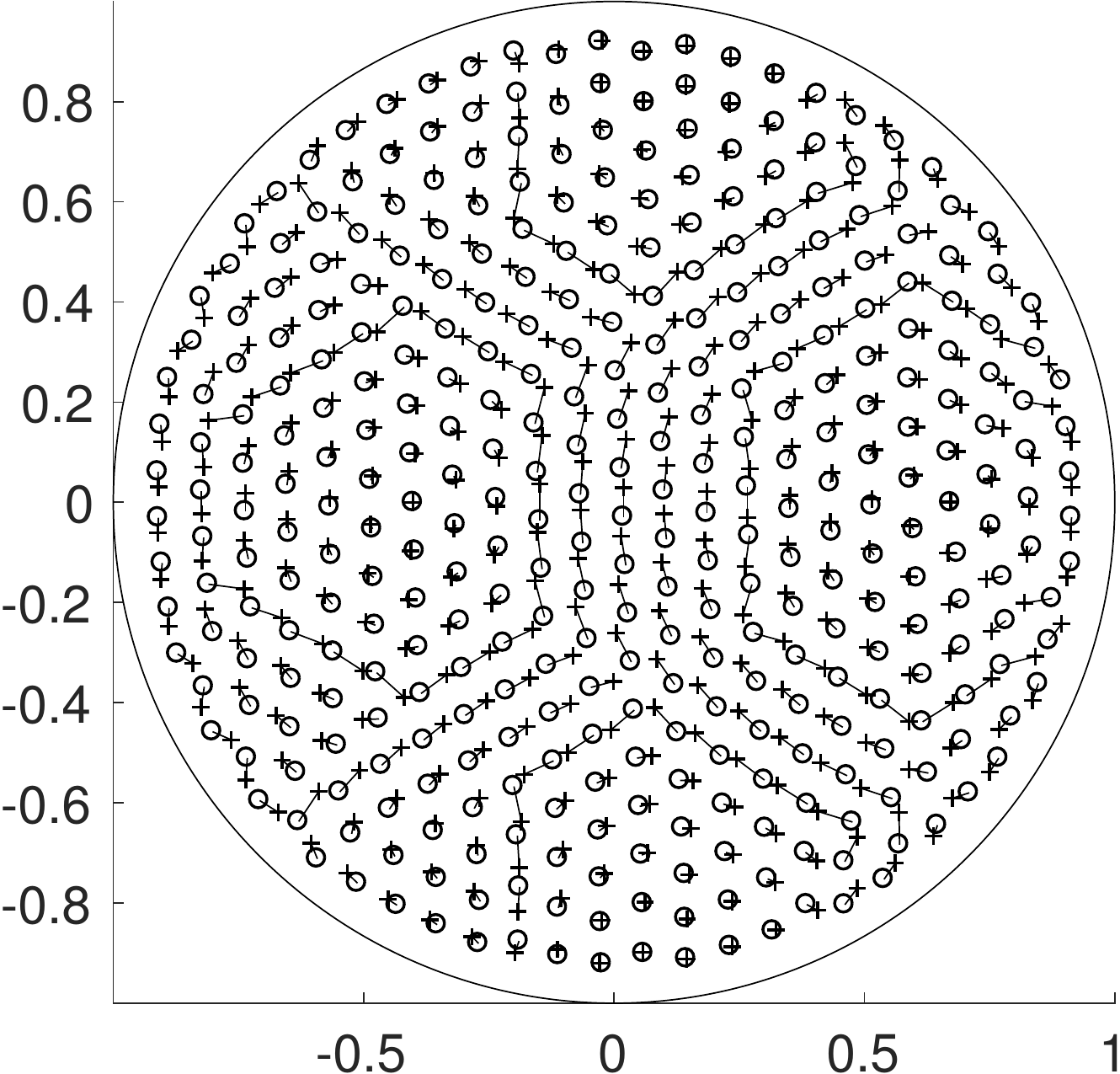}\\
    (a) & (b) \\
    \includegraphics[scale=0.3]{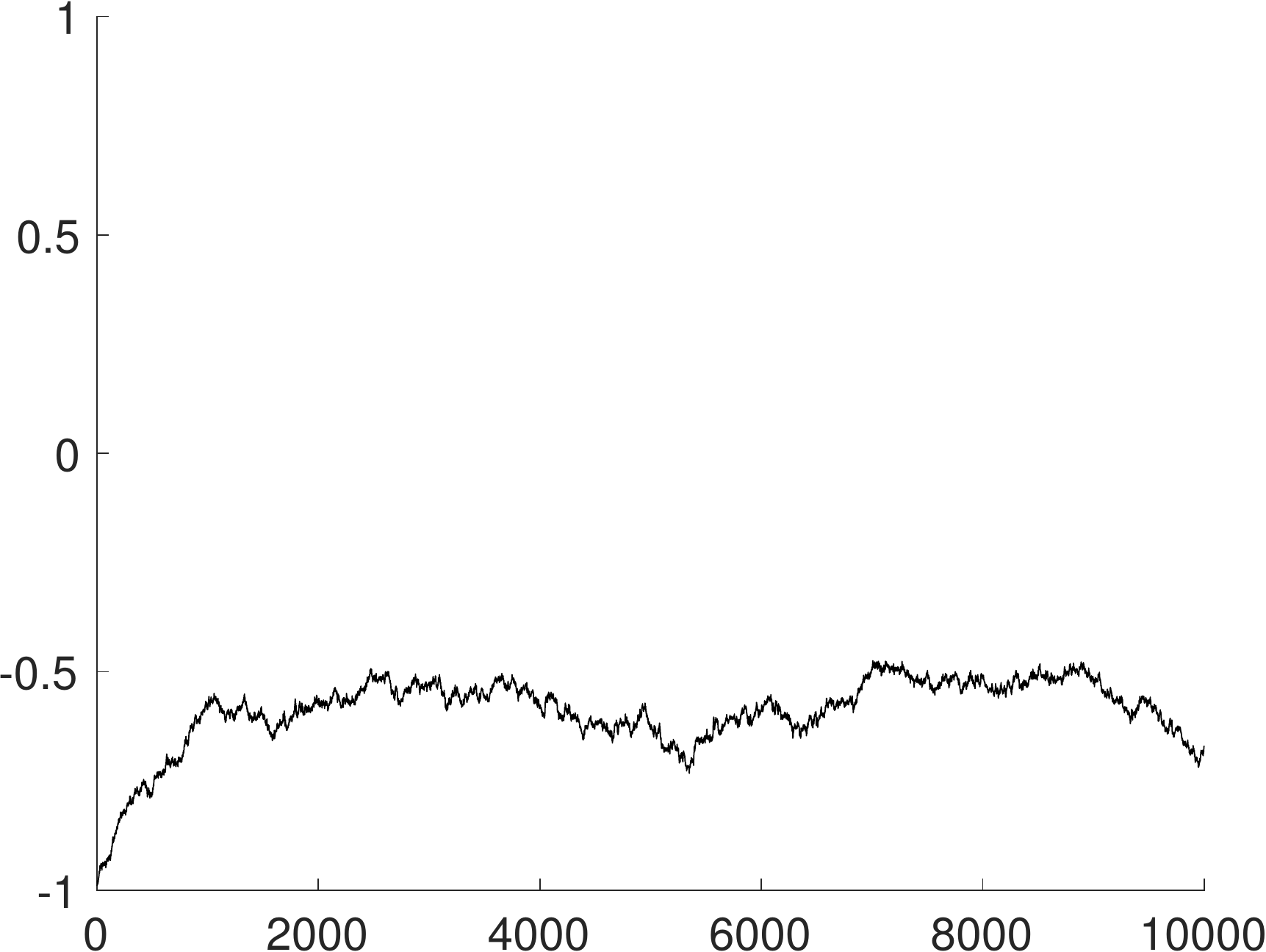} &    \includegraphics[scale=0.3]{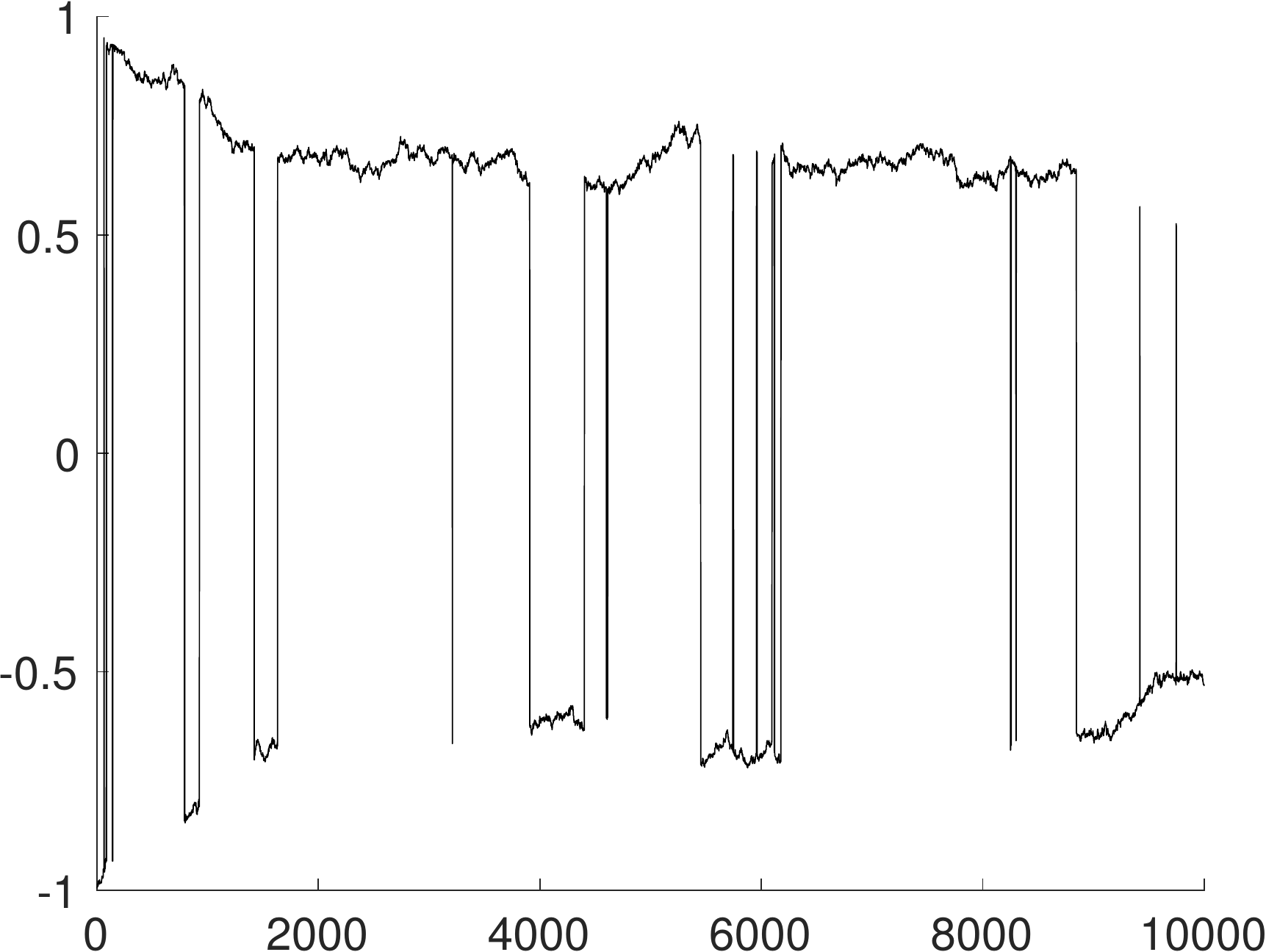} \\
    (c) & (d)
  \end{tabular}
  \caption{(a) The lattice along with the mixed boundary condition (black for $+1$ and white for
    $-1$).  (b) The transport map from $\{\mu(x_i)\}_{i\in I}$ (marked with $+$) to $\{x_j\}_{j\in
      I}$ (marked with $\circ$). (c) The average spin value of the Swendsen-Wang algorithm. (d) The
    average spin value of the Swendsen-Wang algorithm with double flip.}
  \label{fig:ex4}
\end{figure}

\begin{example}
The Ising model is a random quasi-uniform triangular lattice supported on the unit disk, as shown in
Figure \ref{fig:ex4}. The mixed boundary condition is equal to $+1$ in the first and third quadrants
but $-1$ in the second and fourth quadrants. The problem does not have strict rotation and
reflection symmetry due to the random triangulation. Figure \ref{fig:ex4}(a) shows the triangulation
with mesh size $h=0.1$. Figure \ref{fig:ex4}(b) gives the transport map between $\{\mu(x_i)\}_{i\in I}$
(marked with $+$) to $\{x_j\}_{j\in I}$ (marked with $\circ$), which is quite local.

The experiments are performed with a finer triangulation with mesh size $h=0.05$ at the inverse
temperature $\beta=0.5$. We start from the all $-1$ configuration and carry out $10000$ iterations
for both SW and SWDF. The $\eta$ parameter of SWDF is $\eta=1/3$. Figure \ref{fig:ex4}(c) shows that
SW fails to introduce transitions between the $-1$ dominant and the $+1$ dominant profiles, while
Figure \ref{fig:ex4}(d) demonstrates that SWDF explores both profiles with $48$ transitions
out of about $3000$ trials.
\end{example}

\begin{figure}[h!]
  \centering
  \begin{tabular}{cc}
    \includegraphics[scale=0.3]{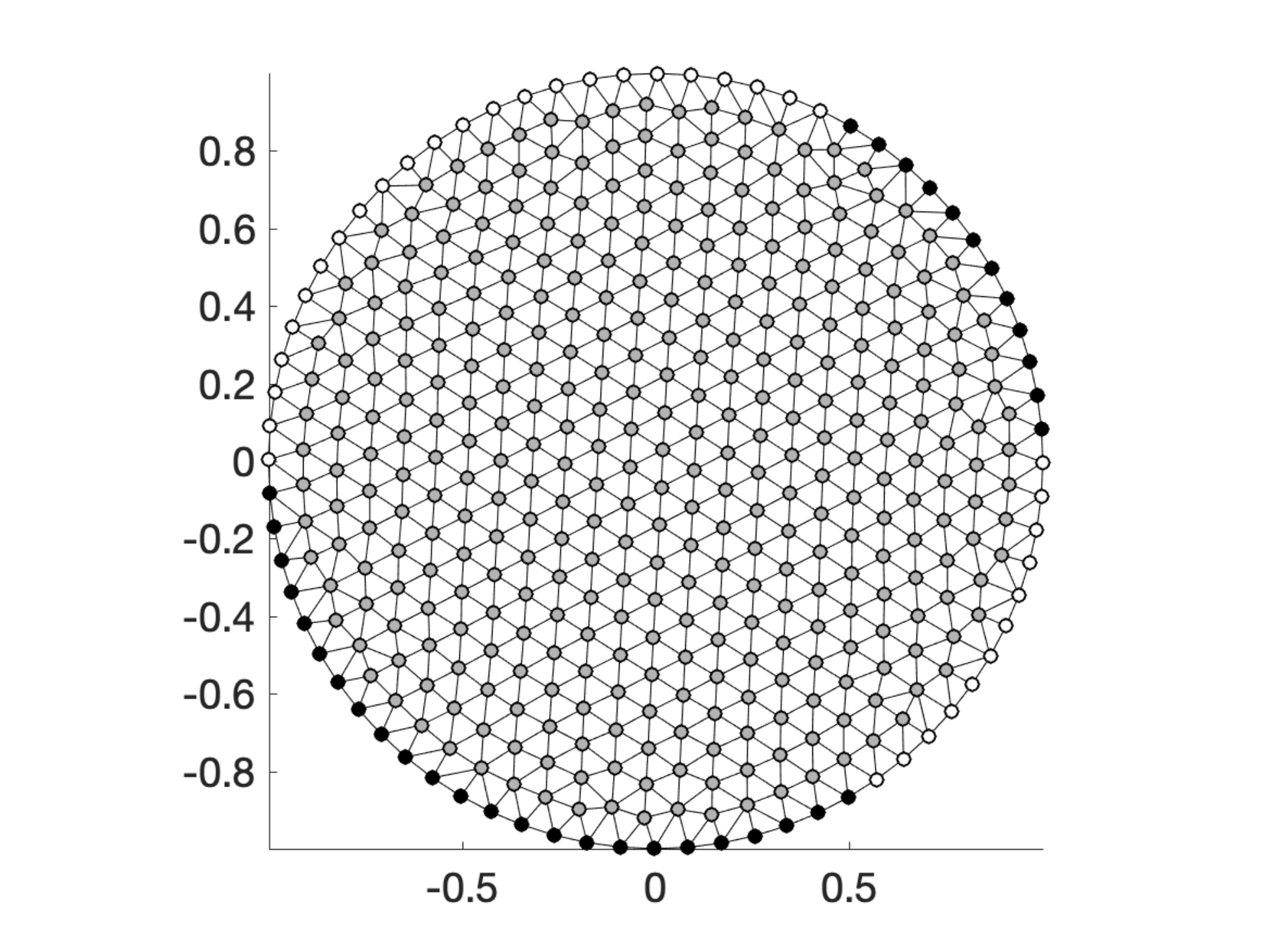} &    \includegraphics[scale=0.3]{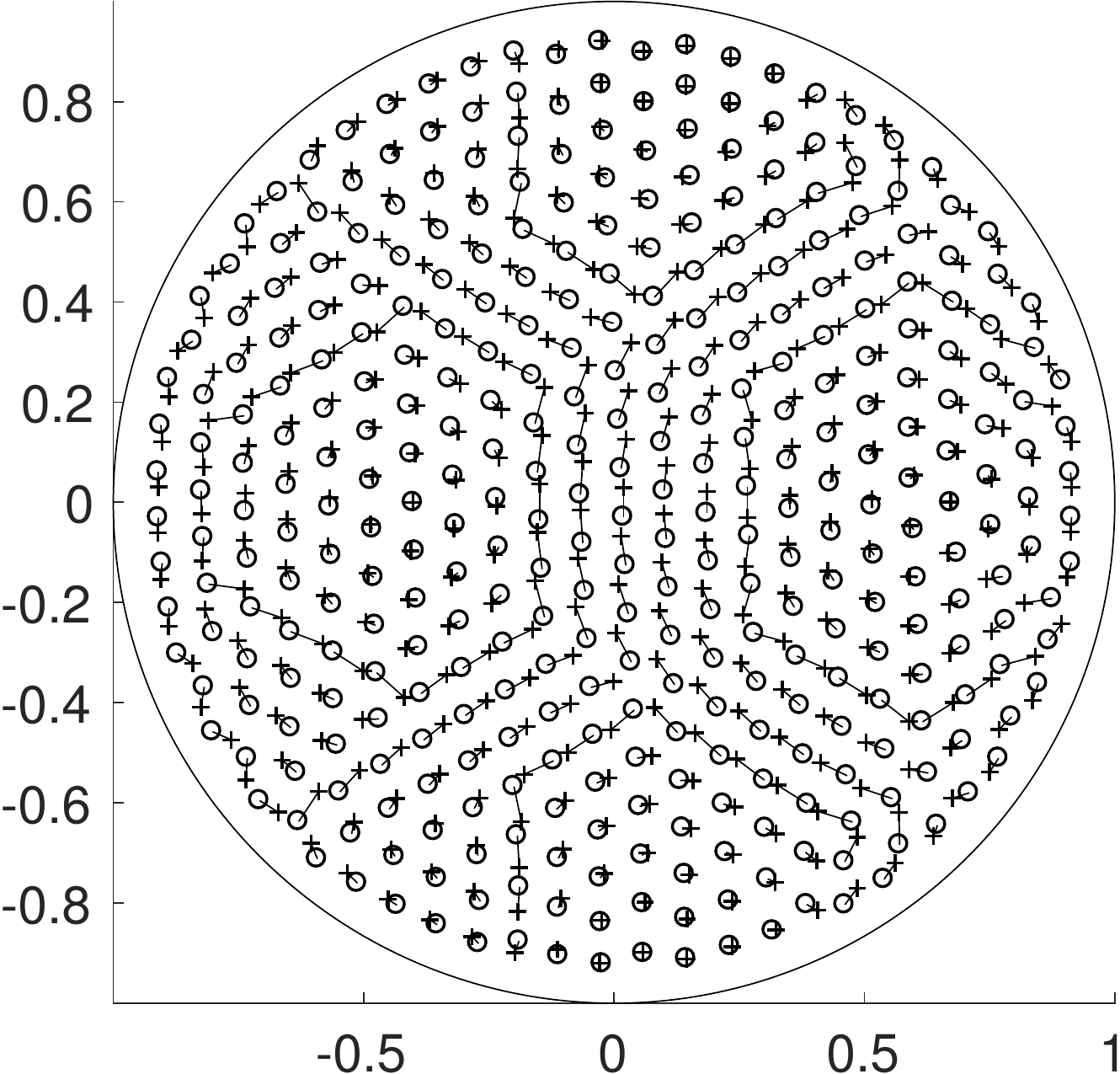}\\
    (a) & (b) \\
    \includegraphics[scale=0.3]{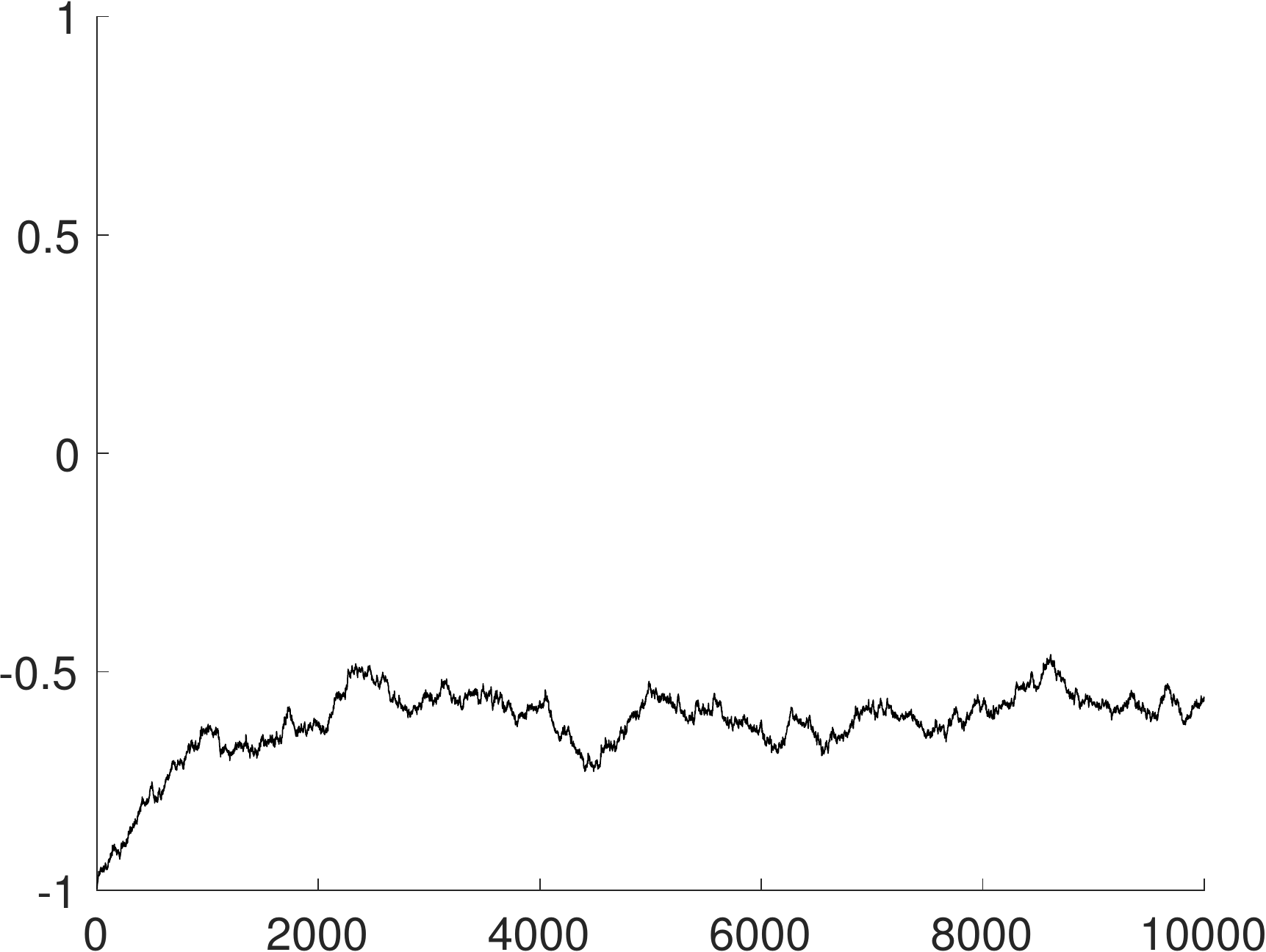} &    \includegraphics[scale=0.3]{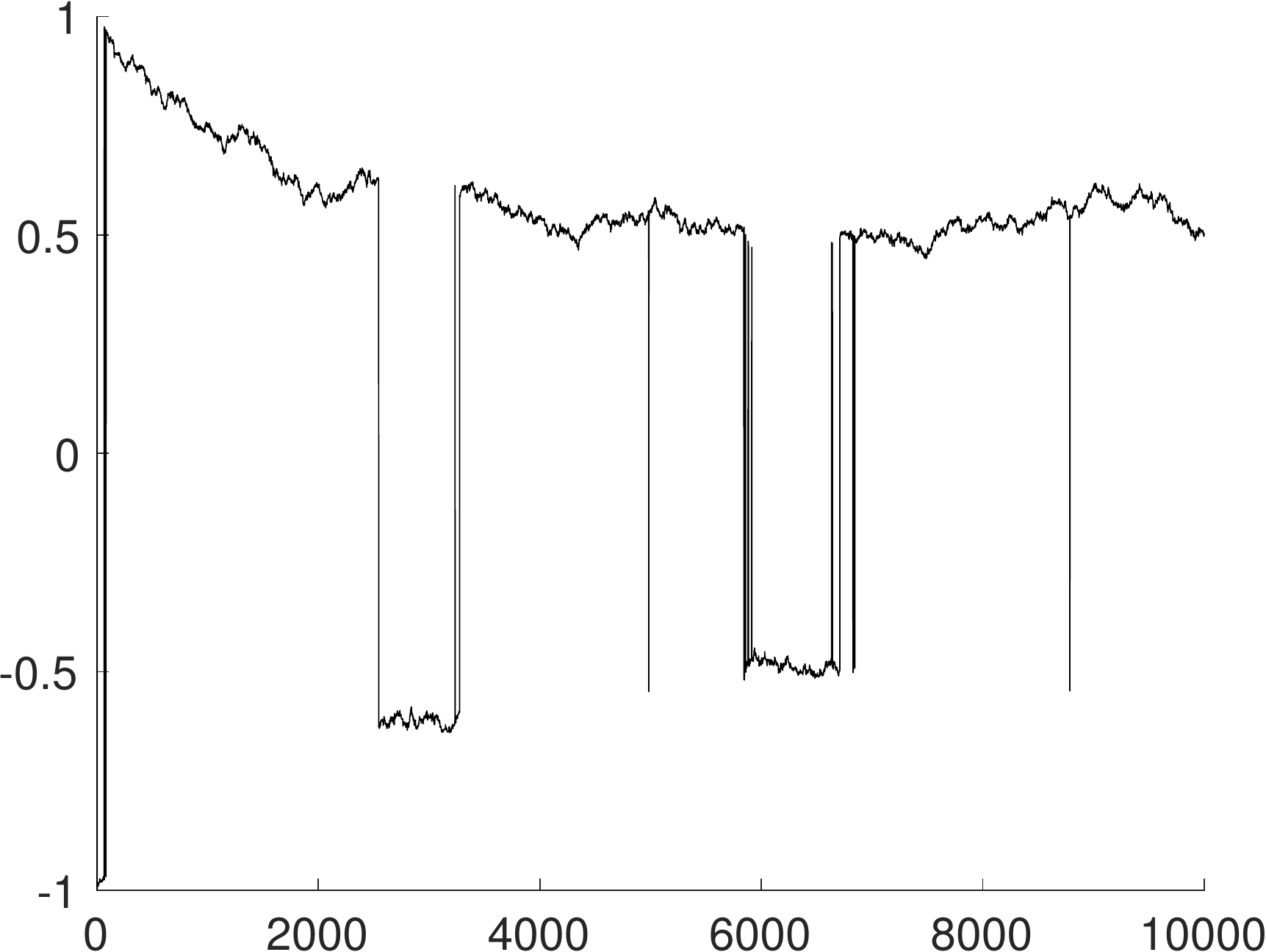} \\
    (c) & (d)
  \end{tabular}
  \caption{(a) The lattice along with the mixed boundary condition (black for $+1$ and white for
    $-1$).  (b) The transport map from $\{\mu(x_i)\}_{i\in I}$ (marked with $+$) to $\{x_j\}_{j\in
      I}$ (marked with $\circ$). (c) The average spin value of the Swendsen-Wang algorithm. (d) The
    average spin value of the Swendsen-Wang algorithm with double flip.}
  \label{fig:ex5}
\end{figure}

\begin{example}
The Ising model is again a random quasi-uniform triangular lattice supported on the unit disk. The
mixed boundary condition is equal to $+1$ on the two arcs with angle in $[0,\pi/3]$ and
$[\pi,5\pi/3]$ but $-1$ on the remaining two arcs. Due to the random triangulation, the problem does
not have strict rotation and reflection symmetry. Figure \ref{fig:ex5}(a) shows the triangulation
with mesh size $h=0.1$. Figure \ref{fig:ex5}(b) plots the transport map between $\{\mu(x_i)\}_{i\in
  I}$ (marked with $+$) to $\{x_j\}_{j\in I}$ (marked with $\circ$).

The experiments are performed with a finer triangulation with mesh size $h=0.05$ at the inverse
temperature $\beta=0.5$. We start from the all $-1$ configuration and carry out $10000$ iterations
for both SW and SWDF. The $\eta$ parameter of SWDF is $\eta=1/3$. Figure \ref{fig:ex5}(c) shows that
SW fails to introduce transitions between the $-1$ dominant and the $+1$ dominant profiles, while
Figure \ref{fig:ex5}(d) demonstrates that SWDF explores both profiles with $35$ transitions
out of about $3000$ trials.
\end{example}

\section{Discussions}\label{sec:disc}

This note introduces the double flip move for accelerating the Swendsen-Wang algorithm for Ising
models with mixed boundary conditions. We consider both symmetric and approximately symmetric
models. In both cases, we prove the detailed balance and demonstrated its efficiency in introducing
explicit transitions between different dominant profiles.

There are many unanswered questions. Regarding the symmetric models, one question is to prove a
polynomial mixing time for the examples in Section \ref{sec:sym}. Regarding the approximately
symmetric models, there are more open questions.
\begin{itemize}
\item Is there a fast matching or optimal transport algorithm that ensures $m^2 = \id$?
\item Better heuristic procedures for constructing the matching between $\{\mu(x_i)\}_{i\in I}$ and
  $\{x_j\}_{j\in I}$?
\item Can we bound the acceptance ratio of the Metropolized double flip move under certain
  assumptions of the approximate symmetry?
\item Proving a rapid mixing result for any approximately symmetric model in Section \ref{sec:app}.
\item The approximate matching is carried out for the interior vertices in this note. However, it
  can be carried out for the edges alternatively.
\end{itemize}

\section*{Acknowledgements}
The author thanks Sourav Chatterjee for discussions and for introducing
\cite{chatterjee2020speeding}.

\bibliographystyle{abbrv}

\bibliography{ref}

\end{document}